\numberwithin{equation}{section}
\def\diag {\mathrm{\diag}\, }
\def \cC{{\mathscr C}}
\def\qand{\quad \text{and}\quad}
\def\N{\mathbb N}
\def\Z{\mathbb Z}
\def\cU{\mathcal U}
\def\cC{\mathcal C}
\def\cU{\mathcal U}
\def\su{{\mathsf {\mathbf u}}}
\def\sw{{\mathsf {w}}}
\def\ss{{\mathsf {s}}}
\def\trans{\cap\kern-0.7em|\kern0.7em}
\newtheorem{proposition}{Proposition}[section]
\newtheorem*{theorem2}{Theorem}
\newtheorem{definition}[proposition] {Definition}
\newtheorem*{definition2}{Definition}
\newtheorem{lemma}[proposition] {Lemma}
\newtheorem{sublemma}[proposition] {Sublemma}
\newtheorem{fact}[proposition]{Fact}
\newtheorem*{corollary}{Corollary}
\newtheorem{corol}[proposition]{Corollary}
\theoremstyle{remark}
\newtheorem{remark}[proposition]{Remark}
\newtheorem*{question}{Question}
\DeclareTextFontCommand{\emph}{\em\bf}
\begin{document}
\title{Robust complex heterodimensional cycles}
\author{S\'ebastien Biebler}

\maketitle
\begin{abstract}
A diffeomorphism $f$ has a heterodimensional cycle if it displays two (transitive) hyperbolic sets $K$ and $K'$ with different indices such that the unstable set of $K$ intersects the stable one of $K'$ and vice versa.  We prove that it is possible to find robust heterodimensional cycles for families of polynomial automorphisms of $\mathbb{C}^3$. The proof is based on Bonatti-D\'iaz blenders.
\end{abstract}
\tableofcontents

\section{Introduction}

The dynamics of uniformly hyperbolic systems such as the horseshoe introduced by Smale is particularly well understood. Such systems were originally conjectured to be dense in the 1960's. However it was soon realized that this is not true. There are two main mechanisms that yield robustly non-hyperbolic behaviors: robust heterodimensional cycles (obtained by Abraham and Smale \cite{AS}) and robust homoclinic tangencies (this is the celebrated Newhouse phenomenon \cite{NEW1,NEW2}).  Later Palis \cite{PAL1,PAL2} conjectured that diffeomorphisms exhibiting either a heterodimensional cycle or a homoclinic tangency  are dense in the complement of the closure of the hyperbolic ones.\\

In this paper, we are interested in extending the phenomenon of robust heterodimensional cycles to the setting of complex dynamics. \\

A compact invariant set $K$ for a diffeomorphism $f$ of a manifold is a {\bf hyperbolic basic set} if $K$ is hyperbolic, transitive and locally maximal. Its {\bf index} denotes the dimension of the unstable manifold of any of its points (all unstable manifolds of points of $K$  have the same dimension since the hyperbolic set $K$ is transitive). 

\begin{definition2}
Let $f$ be a diffeomorphism of a manifold of dimension $3$ or higher having two hyperbolic basic sets $K$ and $K'$. We say that $f$  has a {\bf heterodimensional cycle} associated to $K$ and $K'$ if:
\begin{enumerate} 
\item the indices of $K$ and $K'$ are not equal,
\item the stable sets $W^s(K)$ and $W^s(K')$ intersect respectively the unstable sets $W^u(K')$ and $W^u(K)$.
\end{enumerate}
\end{definition2}

Bonatti and D\'iaz showed in \cite{BD3} that heterodimensional cycles of coindex 1 can be $C^1$-stabilized. In a recent breakthrough, Li and Turaev \cite{LT1} solved this $C^r$-persistence problem in any regularity $r=2, \cdots, \infty, \omega$ (one can also refer to \cite{LT2,LLST}). \\

Let $\mathrm{Aut}(\mathbb{C}^3)$ and $\mathrm{Aut}_d(\mathbb{C}^3)$ be the spaces of holomorphic automorphisms and polynomial automorphisms of degree at most $d$ of $\mathbb{C}^3$. A polynomial automorphism of degree at most $d$ is an element of $\mathrm{Aut}(\mathbb{C}^3)$ whose component functions are polynomials of degree at most $d$. We endow these spaces with the topology induced by local uniform convergence of the map and its inverse. \\

Our main result shows that we can find robust heterodimensional cycles for polynomial automorphisms of $\mathbb{C}^3$: 

\begin{theorem2} \label{main} 
There exists a polynomial automorphism $f$ of $\mathbb{C}^3$ such that every $g \in \mathrm{Aut}(\mathbb{C}^3)$ sufficiently close to $f$ displays  a heterodimensional cycle associated to the hyperbolic continuations of a hyperbolic basic set of index $2$ and a saddle periodic point of index $1$.
\end{theorem2}

\begin{corollary} \label{mainc} 
There exists an integer $d \ge 2$ and a nonempty open set $\cU_d \subset \mathrm{Aut}_d(\mathbb{C}^3)$ such that every $g \in \cU_d$ displays a heterodimensional cycle   associated to  the hyperbolic continuations of a hyperbolic basic set of index $2$ and  a saddle periodic point of index $1$.
\end{corollary}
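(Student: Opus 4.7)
The plan is to read the corollary directly off the main theorem by restricting to a slice of bounded degree. First, I invoke the main theorem to obtain the polynomial automorphism $f$ together with the open neighborhood $\cV \subset \mathrm{Aut}(\mathbb{C}^3)$ on which the heterodimensional-cycle conclusion holds. Since $f$ is polynomial, its components have well-defined degrees; let $d \ge 2$ be any integer that is at least as large as the maximum of these degrees, so that $f$ lies in the finite-dimensional space $\mathrm{Aut}_d(\mathbb{C}^3)$.

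Next, I observe that by the definition given at the beginning of the paper, both $\mathrm{Aut}(\mathbb{C}^3)$ and $\mathrm{Aut}_d(\mathbb{C}^3)$ carry the topology of local uniform convergence of the map and of its inverse. In particular, the inclusion $\mathrm{Aut}_d(\mathbb{C}^3)\hookrightarrow \mathrm{Aut}(\mathbb{C}^3)$ is continuous, so $\cU_d := \cV\cap \mathrm{Aut}_d(\mathbb{C}^3)$ is open in $\mathrm{Aut}_d(\mathbb{C}^3)$; and it is nonempty because it contains $f$. Every $g\in \cU_d$ lies in $\cV$, so the main theorem applies to it and produces the desired heterodimensional cycle between the continuations of the index-$2$ hyperbolic basic set and the index-$1$ saddle periodic point.

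There is no serious obstacle in this deduction: all of the substance is concentrated in the main theorem, and the corollary merely records that the automorphism produced there is itself polynomial of some bounded degree, and that the ambient open neighborhood $\cV$ intersects the polynomial slice $\mathrm{Aut}_d(\mathbb{C}^3)$ in a relatively open set.
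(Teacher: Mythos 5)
Your proposal is correct and takes essentially the same approach as the paper: the paper's proof is exactly to set $\cU_d := \cU \cap \mathrm{Aut}_d(\mathbb{C}^3)$ with $d := \deg F_3$, which is the intersection you describe (your $f$ is the paper's $F_3$ and your $\cV$ is the paper's $\cU$). You are slightly more explicit about why the intersection is open and nonempty, but the substance is identical.
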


In particular, our results can be seen as an analogous of Buzzard's work \cite{BUZ} on complex Newhouse phenomenon (see also \cite{DUJ2,ALZ}) for heterodimensional cycles instead of tangencies. \\

The main tool to obtain persistent heterodimensional cycles is a type of hyperbolic sets with very special fractal properties called {\bf blender}. Introduced by Bonatti and D\'iaz in \cite{BD1}, blenders admit several definitions. Here is one:

\begin{definition2}
A hyperbolic basic set $K_f$ for a diffeomorphism $f$ of a manifold of dimension $3$ or higher is a {\bf $cs$-blender} if there is a nonempty open set $\Delta$ of embedded $d$-dimensional disks, with $d$ smaller than the index of $K_f$, such that for every $C^1$-perturbation $g$ of $f$, the stable set $W^s(K_g)$ of the hyperbolic continuation $K_g$ of $K_f$ for $g$ intersects any disk in $\Delta$.
\end{definition2} 

We will use the blender property to obtain  in $\mathbb{C}^3$ robust intersections between the one-dimensional unstable manifold of a saddle periodic point of index $1$, and the one-dimensional stable set of a hyperbolic basic set of index $2$, which was not expected a priori. Notice that blenders have many other powerful applications in dynamical systems, see for example \cite{ACW,AST,BE1,BE2,BIE1,BIE2,BD2,DUJ1,TA, GTV}. \\ 

Remark that the degree of the maps we obtain is unknown since we used a Runge approximation argument. It is then natural to ask:

\begin{question}
Is it possible to give an explicit degree $d$ in the main Corollary  ? 
\end{question}

This paper is organized as follows. We define successively a polynomial $p$ of $\mathbb{C}$, then a H\'enon map $H$ of $\mathbb{C}^2$ and finally  a polynomial automorphism $F_1$ of $\mathbb{C}^3$ which is a skew-product  over $H$ displaying both a horseshoe of index $2$ and a saddle point of index $1$ (Section 2). Then we show that this horseshoe satisfies an open covering property on the third coordinate and hence the blender property (Section 3). Finally, we create an initial heterodimensional cycle and use the blender property to make it robust (Section 4). \\

{\bf Aknowledgements.} The author would like to thank Pierre Berger, Romain Dujardin, Johan Taflin and Gabriel Vigny for many invaluable comments and discussions. 

This research was partially supported by the ANR project PADAWAN, ANR-21-CE40-0012-01.

\section{A skew-product polynomial automorphism}

\subsection{One dimensional dynamics}

In this subsection, we introduce  a polynomial $p$ displaying  a repelling Cantor set and a sink. 
 
 Let us start with a few notation.
For every $z \in \mathbb{C}$ and $r>0$, let $\mathbb{D}(z,r) \subset \mathbb{C}$ be the  closed disk of center $z$ and radius $r$. We set:
\[ \mathbb{D} := \mathbb{D}(0,1) \, .  \]
 We fix once and for all $\eta := 10^{-4}$ and  also define the following disks:
\[ \mathbb{D}_0 := \mathbb{D}(1/4, \eta  ) \, ,  \quad  \mathbb{D}_1 := \mathbb{D}(i/4,\eta ) \, ,  \quad \mathbb{D}_2 := \mathbb{D}(-1/4, \eta  ) \, , \quad   \mathbb{D}_3 := \mathbb{D}(-i/4, \eta  ) \, ,  \]
\[   \mathbb{D}_4 := \mathbb{D}(3,1) \, , \qand   \mathbb{D}_5 := \mathbb{D}(3, \eta  ) \, .  \]
For  $ j = 0,1,2,3$, let $\ell_j$ be the expanding affine map of linear coefficient $1/\eta$ sending $\mathbb{D}_j$ bijectively onto $\mathbb{D}$. Let $\ell_4$ be  the  affine contraction of linear coefficient $\eta $ sending $\mathbb{D}_4$ bijectively onto $\mathbb{D}_5$. In particular, $\ell_4$ displays an attracting fixed point equal to $3$ with multiplier equal to $\eta $.

\begin{proposition} \label{polynomep} 
For every $\varepsilon>0$, there exists a polynomial map $p : \mathbb{C} \rightarrow \mathbb{C}$ such that:
\[ \forall \,  0 \le j \le   4 \, , \forall z \in \mathbb{D}_j \, , \quad | p(z) - \ell_j (z) | < \varepsilon \qand | p' (z) - \ell'_j (z) | < \varepsilon \, . \] 
\end{proposition}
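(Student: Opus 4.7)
The plan is to use Runge's theorem in a straightforward way. The key observation is that the five closed disks $\mathbb{D}_0, \mathbb{D}_1, \mathbb{D}_2, \mathbb{D}_3, \mathbb{D}_4$ are pairwise disjoint: the first four are tiny disks of radius $\eta = 10^{-4}$ clustered near the origin at distance $1/4$, while $\mathbb{D}_4 = \mathbb{D}(3,1)$ sits in the half-plane $\mathrm{Re}(z) \geq 2$. Consequently $K := \bigcup_{j=0}^{4} \mathbb{D}_j$ is compact and its complement $\mathbb{C} \setminus K$ is connected (as the complement in $\mathbb{C}$ of a finite union of pairwise disjoint closed disks).

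First, I would fix $\rho > 0$ small enough that the slightly enlarged disks $\widetilde{\mathbb{D}}_j := \mathbb{D}(c_j, r_j + \rho)$ (where $c_j, r_j$ are the center and radius of $\mathbb{D}_j$) are still pairwise disjoint, and set $\widetilde{K} := \bigcup_{j=0}^{4} \widetilde{\mathbb{D}}_j$. I would then define a holomorphic function $\Phi$ on an open neighborhood of $\widetilde{K}$ by declaring $\Phi \equiv \ell_j$ on a neighborhood of $\widetilde{\mathbb{D}}_j$; this is well-defined and holomorphic because the enlarged disks are disjoint, and each $\ell_j$ is affine (hence entire).

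Since $\mathbb{C} \setminus \widetilde{K}$ is also connected, Runge's theorem provides, for every $\delta > 0$, a polynomial $p : \mathbb{C} \to \mathbb{C}$ such that $\sup_{z \in \widetilde{K}} |p(z) - \Phi(z)| < \delta$. Choose $\delta := \min(\varepsilon, \rho \varepsilon)$. On each $\mathbb{D}_j$ we then have $|p(z) - \ell_j(z)| < \varepsilon$. For the derivative estimate, observe that for any $z \in \mathbb{D}_j$, the disk of radius $\rho$ centered at $z$ lies inside $\widetilde{\mathbb{D}}_j$, so Cauchy's integral formula applied to $p - \ell_j$ yields
\[
|p'(z) - \ell_j'(z)| \leq \frac{1}{\rho} \sup_{\widetilde{\mathbb{D}}_j} |p - \ell_j| < \frac{\delta}{\rho} \leq \varepsilon.
\]
This gives both estimates simultaneously and completes the proof.

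There is no real obstacle here: the argument is a textbook application of Runge's theorem combined with Cauchy's estimates, made possible by the disjointness of the disks and the triviality of the topology of their complement. The only small care needed is to use an enlarged compact set $\widetilde{K}$ so that Cauchy's formula provides uniform control on the derivatives on the original disks $\mathbb{D}_j$.
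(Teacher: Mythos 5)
Your proof is correct and follows the same route the paper intends: the paper's proof is literally the one-liner ``This is an immediate consequence of the Runge's theorem,'' and you have simply supplied the omitted details, including the standard trick of approximating on enlarged disks and invoking Cauchy's estimate to control $p'$. Nothing is missing or different in substance.
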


\begin{proof}
This is an immediate consequence of the Runge's theorem. 
\end{proof}

\begin{remark} \label{remarkpolynomepfixedpt}
The polynomial $p$ has an  attracting fixed point close to $3$ with multiplier close to $\eta $.
\end{remark}

\begin{corol} \label{corop}
For every $\delta>0$ small enough,  $p^{-1} ( \mathbb{D}(\delta,1) ) $ contains (at least) four connected components $\Delta_{\delta,j}$, $j \in \{0,1,2,3\}$,  which are topological disks close to $\mathbb{D}_j$ for the Hausdorff topology. Moreover each $\Delta_{\delta,j}$ depends continuously on $\delta$. 
\end{corol}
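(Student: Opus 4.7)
The plan is to realize each $\Delta_{\delta,j}$ as the inverse image of $\mathbb{D}(\delta,1)$ by a biholomorphic branch of $p$ living over a slight enlargement of $\mathbb{D}_j$. The heuristic is that $\ell_j$ is an affine bijection $\mathbb{D}_j\to\mathbb{D}$ with expansion $1/\eta$, and $\mathbb{D}(\delta,1)$ is a tiny translate of $\mathbb{D}$, so the preimage of $\mathbb{D}(\delta,1)$ near $\mathbb{D}_j$ under a $C^1$-small perturbation $p$ of $\ell_j$ should be a topological disk close to $\mathbb{D}_j$. The first step is to observe that the proof of Proposition~\ref{polynomep} by Runge's theorem automatically yields, for each small $\theta>0$, the $C^0$-approximation $|p-\ell_j|<\varepsilon$ on a slightly larger disk $\widetilde{\mathbb{D}}_j:=\mathbb{D}(c_j,(1+\theta)\eta)$ (with $c_j$ the center of $\mathbb{D}_j$), from which Cauchy's integral bound upgrades the $C^1$-estimate on any strictly smaller enlargement. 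I would fix such $\theta$ once and for all, and then take $\varepsilon$ and $|\delta|$ much smaller than $\theta$.

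The core of the argument consists of three facts about $p$ restricted to $\widetilde{\mathbb{D}}_j$. \textbf{Injectivity} follows from $|p'|\geq 1/\eta-O(\varepsilon)>0$ and integration along segments. \textbf{Boundary control}: on $\partial\widetilde{\mathbb{D}}_j$ one has $|\ell_j|=1+\theta$, hence $|p|\geq 1+\theta-\varepsilon>1+|\delta|$, so $p(\partial\widetilde{\mathbb{D}}_j)$ is disjoint from $\mathbb{D}(\delta,1)$. \textbf{Rouché's theorem}: for any $w\in\mathbb{D}(\delta,1)$ one has $|\ell_j(z)-w|\geq\theta-|\delta|>\varepsilon\geq|p(z)-\ell_j(z)|$ on $\partial\widetilde{\mathbb{D}}_j$, so $p(\cdot)-w$ and $\ell_j(\cdot)-w$ have the same (single) zero count inside $\widetilde{\mathbb{D}}_j$. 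Setting $\Delta_{\delta,j}:=\widetilde{\mathbb{D}}_j\cap p^{-1}(\mathbb{D}(\delta,1))$, the injectivity together with the Rouché count shows that $p$ restricts to a biholomorphism $\Delta_{\delta,j}\to\mathbb{D}(\delta,1)$, so $\Delta_{\delta,j}$ is a topological disk; the boundary control then yields $\partial\widetilde{\mathbb{D}}_j\cap p^{-1}(\mathbb{D}(\delta,1))=\emptyset$, so the connected component of $p^{-1}(\mathbb{D}(\delta,1))$ meeting $\widetilde{\mathbb{D}}_j$ is contained in $\widetilde{\mathbb{D}}_j$ and coincides with $\Delta_{\delta,j}$. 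The four $\Delta_{\delta,j}$ sit in the disjoint $\widetilde{\mathbb{D}}_j$'s, hence are four distinct connected components.

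Hausdorff closeness to $\mathbb{D}_j$ follows by comparing with the affine model: $\ell_j^{-1}(\mathbb{D}(\delta,1))=\mathbb{D}(c_j+\eta\delta,\eta)$ is within $\eta|\delta|$ of $\mathbb{D}_j$, and the uniform expansion $|p'|\approx 1/\eta$ transfers this bound (with additive error $O(\varepsilon\eta)$) to $\Delta_{\delta,j}$. Continuity of $\delta\mapsto\Delta_{\delta,j}$ is then automatic, since $\Delta_{\delta,j}=(p|_{\widetilde{\mathbb{D}}_j})^{-1}(\mathbb{D}(\delta,1))$ is the image of the continuously varying disk $\mathbb{D}(\delta,1)$ under a fixed biholomorphism. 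The only genuinely delicate point is the enlargement step: without working on some $\widetilde{\mathbb{D}}_j\supsetneq\mathbb{D}_j$, the image $p(\partial\mathbb{D}_j)$ lies only in the annulus $\{1-\varepsilon<|w|<1+\varepsilon\}$ which genuinely meets $\mathbb{D}(\delta,1)$, and one loses the topological boundary control needed to identify $\Delta_{\delta,j}$ as a whole connected component.
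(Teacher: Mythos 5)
The paper states this corollary without proof, presenting it as an immediate consequence of Proposition~\ref{polynomep}, so there is no paper argument to compare against; your write-up correctly supplies the missing details. Two small wrinkles are worth flagging. First, Proposition~\ref{polynomep} as literally stated only controls $p-\ell_j$ and $p'-\ell_j'$ on $\mathbb{D}_j$ itself, so the bound on the enlargement $\widetilde{\mathbb{D}}_j$ is not ``automatic'' from the statement; one must apply Runge to the enlarged (still pairwise disjoint and disjoint from $\mathbb{D}_4$) disks, and then Cauchy on an intermediate annulus gives the derivative bound --- you clearly have this in mind, but it is a modification of the proposition, not a free consequence of it. Second, the phrase ``injectivity follows from $|p'|\ge 1/\eta - O(\varepsilon)>0$ and integration along segments'' emphasizes the wrong hypothesis: on a convex domain a lower bound on $|p'|$ alone does not give injectivity (consider $e^z$); what the segment integral actually uses is $|p'-1/\eta|<\varepsilon$, giving $|p(z_2)-p(z_1)|\ge(1/\eta-\varepsilon)|z_2-z_1|$. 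With those clarifications, the rest is exactly right, and your observation that without the enlargement the Rouché/boundary-separation argument genuinely fails (since $p(\partial\mathbb{D}_j)$ meets $\mathbb{D}(\delta,1)$) is the key point that justifies the extra care.
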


\subsection{Two dimensional dynamics}

Here we define a H\'enon map $H$ from the polynomial $p$ defined in the latter subsection.
We take $\varepsilon>0$ small and a polynomial $p$ as given by \cref{polynomep}.  

Let us take $b \in \mathbb{C}^*$ and let us consider the H\'enon map:
\[ H : (z,w) \in \mathbb{C}^2 \mapsto (p(z)+bw,z) \in \mathbb{C}^2  \, . \]
Recall that for $b \neq 0$ the map $H$ is a polynomial automorphism of $\mathbb{C}^2$. 
When $|b|>0$ is small enough, by \cref{corop},  the set $H^{-1}(\mathbb{D}^2) \cap \mathbb{D}^2$ has (at least) four connected components $U_j$, with: 
 \[  U_j :=   \bigcup_{w \in \mathbb{D}} \Delta_{-bw,j} \times \{w\}  \, , \,    \quad  j \in \{0,1,2,3\}  \, . \]
Recall that each $\Delta_{-bw,j}$ is a topological disk close to $\mathbb{D}_j$ for the Hausdorff topology. Therefore $U_j$ is close to $\mathbb{D}_j \times \mathbb{D}$. 

\begin{proposition} \label{henon}
There exists $b_o >0$ such that for any $b \in \mathbb{C}^*$ satisfying  
 $|b|<b_o$, the H\'enon map $H$ 
displays a hyperbolic  basic set of saddle type 
\[ K_o := \bigcap_{n \in \mathbb{Z}} H^{n}(\bigsqcup_{ 0 \le j \le 3 } U_j) \]
 and an attracting fixed point $S_o$ close to $(3,3)$ whose basin contains $\mathbb{D}(3,1)^2$.  
\end{proposition}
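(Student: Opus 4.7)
The plan splits the proposition into two essentially independent parts: the existence of the attracting fixed point $S_o$ with basin containing $\mathbb{D}(3,1)^2$, and the existence and hyperbolicity of the basic set $K_o$. Throughout one fixes $\varepsilon$ very small relative to $\eta$ and $b_o$ much smaller still.

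\textbf{The sink.} By \cref{remarkpolynomepfixedpt} the polynomial $p$ has an attracting fixed point $q_o$ close to $3$ with multiplier close to $\eta$. A point $(z,w)$ is fixed by $H$ iff $w=z$ and $p(z)=(1-b)z$; since $(1-b)z$ is a small perturbation of $z$, this has a root $z_o$ close to $q_o$, and one sets $S_o:=(z_o,z_o)$. To see that $\mathbb{D}(3,1)^2$ lies in its basin, apply \cref{polynomep} with $j=4$: for $(z,w)\in\mathbb{D}(3,1)^2$,
\[ |p(z)+bw-3| \;\le\; |\ell_4(z)-3|+\varepsilon+4|b| \;\le\; \eta+\varepsilon+4|b|, \]
so $H$ sends $\mathbb{D}(3,1)^2$ into $\mathbb{D}(3,\eta+\varepsilon+4|b|)\times\mathbb{D}(3,1)$, and one further iterate contracts the second factor by the same amount. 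Iterating, both coordinates converge geometrically to $S_o$.

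\textbf{The Markov covering.} By definition of $U_j$, a point $(z,w)\in U_j$ satisfies $w\in\mathbb{D}$, $z\in\Delta_{-bw,j}$, and $p(z)+bw\in\mathbb{D}$; hence $H(z,w)=(p(z)+bw,z)$ lands in $\mathbb{D}\times V_j$, where $V_j$ is a small neighborhood of $\mathbb{D}_j$ of diameter of order $2\eta$. For each fixed $w$ the map $z\mapsto p(z)+bw$ restricts to a biholomorphism $\Delta_{-bw,j}\to\mathbb{D}$, since $p$ is univalent on $\Delta_{-bw,j}$ as a small perturbation of $\ell_j$. Comparing with the inclusion $U_k\subset V_k\times\mathbb{D}$ and using the continuous dependence of $\Delta_{-bw,j}$ on $w$ (\cref{corop}), one checks that $H(U_j)\cap U_k$ consists of a single topological ``box'' for every $(j,k)\in\{0,1,2,3\}^2$. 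This Markov structure conjugates $H|_{K_o}$ to the full shift on four symbols; in particular $K_o$ is transitive, and being the maximal invariant set in the open set $\bigsqcup_j U_j$, it is automatically locally maximal.

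\textbf{Hyperbolicity.} Introduce the complex cone fields
\[ C^u := \{(u,v)\in\mathbb{C}^2 : |v|\le\alpha|u|\}, \qquad C^s := \{(u,v)\in\mathbb{C}^2 : |u|\le\alpha|v|\}, \]
with a fixed $\alpha\in(0,1)$, say $\alpha=1/2$. Since
\[ DH_{(z,w)} = \begin{pmatrix} p'(z) & b \\ 1 & 0 \end{pmatrix} \]
and $|p'(z)|\ge 1/\eta-\varepsilon$ on each $\Delta_{-bw,j}$ by \cref{polynomep}, a direct calculation shows that $DH$ maps $C^u$ strictly into itself and expands vectors therein by a factor of at least $(1/\eta-\varepsilon-\alpha|b|)/\sqrt{1+\alpha^2}$; the analogous computation for $DH^{-1}$ (whose non-zero entries involve $1/b$) shows it maps $C^s$ strictly into itself with expansion factor that goes to infinity as $|b|\to 0$. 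The standard cone criterion then yields a hyperbolic splitting $T_{K_o}\mathbb{C}^2=E^u\oplus E^s$ of saddle type, each factor of complex dimension one, completing the proof. The main technical point is the Markov step: one must verify that, despite the $w$-dependence of $\Delta_{-bw,j}$, the image $H(U_j)$ crosses each $U_k$ in a single connected component. This is routine given the Hausdorff proximity $\Delta_{-bw,j}\approx\mathbb{D}_j$ and the near-product structure of all the sets involved, but it is where the careful bookkeeping happens.
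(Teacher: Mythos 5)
Your proposal is correct and follows essentially the same route as the paper: cone-field criterion for hyperbolicity, a Markov/horseshoe crossing argument (the paper phrases it via the ``full component'' definition of Katok--Hasselblatt) for transitivity and local maximality, and a perturbation-of-the-fixed-point plus contraction argument for the sink. The only step I would tighten is the last line of your sink paragraph: iterating the estimate $r\mapsto \eta r+\varepsilon+4|b|$ shrinks $\mathbb{D}(3,1)^2$ into a forward-invariant bidisk of radius about $(\varepsilon+4|b|)/(1-\eta)>0$, which does not by itself give convergence to $S_o$ (nor have you checked that $S_o$ is attracting). The paper closes this by observing that $DH^2$ is uniformly tiny on $\mathbb{D}(3,1)^2$, so $H^2$ is a genuine contraction by the mean value inequality and all orbits collapse to the fixed point; one line of this form is needed to finish your argument.
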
 

\begin{proof} 
{\bf We first show that $K_o$ is a hyperbolic basic set of saddle type} (see Theorem 3.1 in \cite{FS} for the proof of a very similar result). 
The set $K_o$ is clearly compact, invariant by $H$ and locally maximal. 
We are going to show that $K_o$ is a horseshoe in the sense of Definition 6.5.2 of \cite{KH}.
The image $V_j := H(U_j)$ is equal to:
\[ V_j = H(U_j) =     \{ (p(z) + bw, z) \, | \, w \in \mathbb{D}  \, , \, z \in \Delta_{-bw,j} \} \, , \]
which is close to the curve $\{ (p(z),z) \, | \, z \in \mathbb{D}_j  \}$ when $b$ is small. Notice that the second coordinate projection of $U_j$ is equal to $\mathbb{D}$. Also for every $w \in \mathbb{D}$, the first coordinate projection of  the restriction of $H$ to  $U_j \cap (\mathbb{D} \times \{w\}) =  \Delta_{-bw,j} \times \{w\} $ is a bijection onto  $p( \Delta_{-bw,j}) + bw = \mathbb{D}$. Therefore the connected component $U_j$ is full in the sense of Definition 6.5.1 of \cite{KH}.   
 We also have:
\[ U_j \subset   \mathrm{int}  \,   \mathbb{D}  \times  \mathbb{D}  \qand  V_j \subset   \mathbb{D} \times    \mathrm{int}   \,  \mathbb{D}  \, . \]
We now define the two following constant cone fields:
 \[ \chi^{u} = \{(v_{1},v_{2}) \in \mathbb{C}^{2} :   |v_{2}|  \le 10^{-3} \cdot  |v_{1}|   \}    \qand  \chi^{s} = \{ (v_{1},v_{2}) \in \mathbb{C}^{2} :  |v_{1}| \le  10^{-3} \cdot  |v_{2}|  \} \, .   \]
Notice that at any point  $(z,w) \in \bigsqcup_{0 \le j \le 3}  U_j$ the differential $DH$ is equal to:
\[\begin{pmatrix}
p' (z) & b \\
1 & 0 
\end{pmatrix}
\]
with $p' (z)$ close to $1/\eta  = 10^4$ by \cref{polynomep}. 
Therefore, if $b$ is small enough, any  non zero vector in  $\chi^{u}$ is sent into $\mathrm{int} \, \chi^{u}$ and expanded by a factor larger than $10^3$ by $D H$. 
Also  at any point  $(z,w) \in \bigsqcup_{0 \le j \le 3} V_j$ the differential $DH^{-1}$ is equal to:
\[\begin{pmatrix}
0 & 1 \\
1/b & - p' (w) /b  
\end{pmatrix}
\]
with $p' (w)$  again close to $1/\eta  = 10^4$. 
Therefore, if $b$ is small enough, any  non zero vector in $\chi^{s}$  is sent into $\mathrm{int} \,  \chi^{s}$ and expanded by a factor  larger than $10^3$ by $DH^{-1}$.
 Then $K_o$ is a horseshoe in the sense of Definition 6.5.2 of \cite{KH}. According to the discussion following this definition, $K_o$ is also hyperbolic (this is an immediate consequence of the cone field criterion (Corollary 6.4.8 of \cite{KH}) and topologically conjugate to a shift. In particular, it is transitive and therefore a hyperbolic basic set (of saddle type). 
 \medskip

{\bf Now we show that $H$ displays a sink.}
Recall that $p$ displays a sink of period 1 close to 3 by \cref{remarkpolynomepfixedpt}. 
Hence the map $(z,w) \mapsto (p(z),z)$ has a fixed point close to $(3,3)$.
Therefore by the implicit function theorem, for $b$ small, the map $H$ displays the continuation of a fixed point $S_o$ close to $(3,3)$. The  differential of $H$ at any $(z,w) \in \mathbb{D}(3,1)^2$ is equal to: 
\[\begin{pmatrix}
p' (z) & b \\
1 & 0 
\end{pmatrix}
\]
where $p' (z)$ is close to $\eta  = 10^{-4}$ and $b$ is small. 
Therefore the two eigenvalues of the  differential of $H$ at $S_o$ are smaller than 1 in modulus and thus $S_o$ is a sink for $H$.
Since $p(\mathbb{D}_4)$ is close to $\mathbb{D}_5$ and since $b$ is small,     $\mathbb{D}(3,1)^2$ is sent into itself by $H$. Also the  differential of $H^2$ at any $(z,w) \in \mathbb{D}(3,1)^2$ is close to: 
\[\begin{pmatrix}
10^{-8} & 0 \\
10^{-4} & 0 
\end{pmatrix}
\]
Then, by the mean value inequality,  the distance of $H^2(z,w)$ to $S_o$ is smaller than half  the distance of $(z,w)$ to $S_o$, hence $H^n(z,w)$ tends to $S_o$ when $n$ is large. 
Thus the basin of $S_o$ contains $\mathbb{D}(3,1)^2$. 
\end{proof}

\subsection{Three dimensional dynamics}

In this subsection, we define a polynomial automorphism $F_1$ of $\mathbb{C}^3$ which is a skew-product over the H\'enon map $H$. 
We start with the following one-dimensional intermediate result: 

\begin{lemma}  \label{vartheta}
For every $\zeta>0$, there exists a polynomial  map $q : \mathbb{C} \rightarrow \mathbb{C}$ such that:
\[ \forall \,  0 \le j \le 3 \, , \forall  \,  z \in \mathbb{D}_j \, , \quad | q(z) - \frac{1}{10}  e^{i \frac{j \pi}{2}} | < \zeta  \qand | q' (z)  | < \zeta \, ,  \] 
\[ \forall \,    z \in \mathbb{D}_4 \, , \quad | q(z) + \frac13 | < \zeta \qand | q' (z)  | < \zeta \, . \]
\end{lemma}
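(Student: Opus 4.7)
The plan is to mimic the proof of \cref{polynomep}: this is again a direct application of Runge's theorem. The key preliminary observation is that the five disks $\mathbb{D}_0, \mathbb{D}_1, \mathbb{D}_2, \mathbb{D}_3, \mathbb{D}_4$ are pairwise disjoint closed disks in $\mathbb{C}$ (the first four have centers of modulus $1/4$ and radius $\eta = 10^{-4}$, while $\mathbb{D}_4$ is centered at $3$ with radius $1$), and the union $K := \bigsqcup_{j=0}^{4} \mathbb{D}_j$ is a compact set whose complement in $\mathbb{C}$ is connected (a finite disjoint union of closed topological disks has this property).

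First I would introduce the target holomorphic function. Choose radii $r_j > 0$ just slightly larger than the radii of $\mathbb{D}_j$ so that the enlarged disks $\widetilde{\mathbb{D}}_j$ (concentric with $\mathbb{D}_j$) are still pairwise disjoint, and set $U := \bigsqcup_{j=0}^{4} \mathrm{int}\,\widetilde{\mathbb{D}}_j$. Define $h \colon U \to \mathbb{C}$ by
\[
h(z) := \tfrac{1}{10}\, e^{i \tfrac{j \pi}{2}} \quad \text{for } z \in \mathrm{int}\,\widetilde{\mathbb{D}}_j,\ j \in \{0,1,2,3\}, \qquad h(z) := -\tfrac{1}{3} \quad \text{for } z \in \mathrm{int}\,\widetilde{\mathbb{D}}_4.
\]
Since $h$ is locally constant on $U$, it is holomorphic on $U$ and $h' \equiv 0$.

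Next I would apply Runge's theorem. Since $K \subset U$ is compact with connected complement in $\mathbb{C}$, Runge's theorem yields, for any prescribed $\zeta' > 0$, a polynomial $q \colon \mathbb{C} \to \mathbb{C}$ with $|q(z) - h(z)| < \zeta'$ for every $z \in K$. In particular the first inequality of the lemma is satisfied on each $\mathbb{D}_j$ provided $\zeta' \le \zeta$.

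The only step that requires a small additional argument is the derivative bound. I would actually apply the previous paragraph on the slightly larger compact $\widetilde{K} := \bigsqcup_j \widetilde{\mathbb{D}}_j$ (which still has connected complement), obtaining $|q - h| < \zeta'$ on $\widetilde{K}$. For any $z \in \mathbb{D}_j$, the circle of radius $\rho_j := r_j - \mathrm{rad}(\mathbb{D}_j) > 0$ around $z$ lies inside $\widetilde{\mathbb{D}}_j$, so Cauchy's integral formula for the derivative gives
\[
|q'(z) - h'(z)| \;=\; |q'(z)| \;\le\; \frac{\zeta'}{\rho_j}.
\]
Choosing $\zeta' := \zeta \cdot \min_j \rho_j$ (and also $\zeta' \le \zeta$) forces both $|q - h| < \zeta$ and $|q'| < \zeta$ on $K$, completing the proof. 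There is no genuine obstacle here; the mildly non-routine point is simply remembering to apply Runge on slightly thickened disks so that Cauchy estimates transfer the $C^0$ approximation to a $C^1$ approximation.
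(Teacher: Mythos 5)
Your proposal is correct and is exactly the argument the paper has in mind: the paper's proof is the one-line statement that this is an immediate application of Runge's theorem, and your write-up simply fills in the standard details (locally constant target function on disjoint thickened disks with connected complement, plus a Cauchy estimate to pass from $C^0$ to $C^1$ approximation). No discrepancy.
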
 

\begin{proof} 
This is again an immediate application of the Runge's theorem.
\end{proof}

Let us fix once and for all:
\[ \lambda := \frac{10}{9} \, .  \]

We take $\zeta>0$ small and a polynomial $q$ as given by \cref{vartheta}. 
We consider the H\'enon map $H$ given by \cref{henon}.
The following map  is a  polynomial automorphism of $\mathbb{C}^3$ for $b \neq 0$: 
\[ F_1 : (z,w,t) \in \mathbb{C}^3 \mapsto (H(z,w) , \lambda t + q(z)) = (p(z) + bw ,z ,  \lambda t + q(z)) \, .  \] 

We set:
\[ J := \{0,1,2,3\} \, . \] 
Notice that $\mathbb{D}^3 \cap F_1^{-1}(\mathbb{D}^3)$ displays  (at least) four connected components $R^j (F_1)$,  $j \in J$, close to:
\[ \mathbb{D}_j \times \mathbb{D} \times \frac1\lambda  \cdot  ( \mathbb{D} - \frac{1}{10}  e^{i \frac{j \pi}{2}} ) \, . \]   
For every  $F$ close to  $F_1$, for every $j \in J$, we denote by $R^j = R^j (F)$ the connected component of $\mathbb{D}^3 \cap F^{-1}(\mathbb{D}^3)$ which is the continuation of $R^j(F_1)$.  Its image $F(R^j)$ is close to $\{ (p(z) ,z) \, | \, z \in \mathbb{D}_j \}  \times \mathbb{D}$. \medskip

Let us consider  the two following constant cone fields:
 \[ C^{u} :=   \{ (v_{1},v_{2},v_{3}) \in \mathbb{C}^{3} : |v_{2}| \le  10^{-3} \cdot   \|(v_1,v_3)\|_2 \}    \]  
\[  \qand  C^{s} :=   \{ (v_{1},v_{2},v_{3}) \in \mathbb{C}^{3} :   \|(v_1,v_3)\|_2   \le 10^{-3} \cdot |b| \cdot  |v_{2}|\}   \, . \] 

\begin{proposition} \label{coneproperties}
For every  automorphism  $F$ close to $F_1$,   any  non zero vector in  $C^{u}$  is sent into $\mathrm{int} \, C^{u}$ and expanded by a factor  larger than $\frac{1+\lambda}{2}$ by $D_{(z,w,t)} F$ for any  
 $(z,w,t) \in \bigsqcup_{j \in J} R^j$.
 
  Moreover  any  non zero vector   $C^{s}$  is sent into $\mathrm{int} \,   C^{s}$ and expanded by a factor  larger than $10^3$ by $D_{(z,w,t)} F^{-1}$ for any $(z,w,t) \in \bigsqcup_{j \in J} F(R^j)$. 
\end{proposition}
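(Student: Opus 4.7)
The plan is to verify both cone conditions by a direct computation on $F_1$ at an arbitrary point, and then transfer them to nearby automorphisms $F$ by continuity of the differential. Since all the maps are holomorphic, local uniform convergence $F \to F_1$ implies uniform convergence of $DF^{\pm 1} \to DF_1^{\pm 1}$ on the compact sets $R^j$ and $F(R^j)$ via Cauchy estimates. Everything reduces to the two explicit Jacobians
$$DF_1(z,w,t) = \begin{pmatrix} p'(z) & b & 0 \\ 1 & 0 & 0 \\ q'(z) & 0 & \lambda \end{pmatrix}, \qquad DF_1^{-1}(z',w',t') = \begin{pmatrix} 0 & 1 & 0 \\ 1/b & -p'(w')/b & 0 \\ 0 & -q'(w')/\lambda & 1/\lambda \end{pmatrix},$$
whose entries we control by using $|p'(\cdot)| \geq 10^4 - O(\varepsilon)$ on $\mathbb{D}_j$ (from \cref{polynomep}), $|q'(\cdot)| < \zeta$ on $\mathbb{D}_j$ (from \cref{vartheta}), and the freedom to take $\varepsilon, \zeta, |b|$ as small as we wish.

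For the unstable cone, pick $v = (v_1, v_2, v_3) \in C^u \setminus \{0\}$ and set $v' = DF_1 v = (p'(z)v_1 + bv_2,\, v_1,\, q'(z)v_1 + \lambda v_3)$. The defining inequality $|v_2| \leq 10^{-3}\|(v_1, v_3)\|_2$ forces $|v_1'|$ to be comparable to $|p'(z)||v_1|$, so $\|(v_1', v_3')\|_2$ dominates $|v_2'| = |v_1|$ by a factor of at least $10^4/2$, placing $v'$ well inside $\mathrm{int}\, C^u$. For the expansion factor, I combine $\|v\|^2 \leq (1 + 10^{-6})(|v_1|^2 + |v_3|^2)$ with
$$\|v'\|^2 \geq (|p'(z)|^2 + 1)|v_1|^2 + \lambda^2|v_3|^2 - O(|b| + \zeta)(|v_1|^2 + |v_3|^2),$$
and observe that $|p'(z)|^2 + 1 \gg \lambda^2$, so the worst ratio $\|v'\|/\|v\|$ is at least $\lambda - O(|b|+\zeta)$. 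The key observation is that the slow direction of expansion in $C^u$ is the third coordinate (factor $\lambda$), not the first; since the gap $\lambda - (1+\lambda)/2 = (\lambda-1)/2 = 1/18$ is positive, taking $|b|, \zeta$ small enough yields the claimed lower bound.

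The analysis of the stable cone is parallel. For $v \in C^s$, the dominant component of $v' = DF_1^{-1}v$ is $v_2' = (v_1 - p'(w')v_2)/b$, of size $\sim 10^4 |v_2|/|b|$, since the defining inequality $|v_1| \leq 10^{-3}|b||v_2|$ makes the $v_1$-contribution negligible next to $p'(w') v_2$. The other components satisfy $|v_1'| = |v_2|$ and $|v_3'| \leq (\zeta + 10^{-3}|b|)|v_2|/\lambda$, so $\|(v_1', v_3')\|_2 \leq \sqrt{2}|v_2|$; this gives invariance $\|(v_1',v_3')\|_2 \leq 10^{-3}|b||v_2'|$ with large margin, and expansion $\|v'\|/\|v\| \geq |v_2'|/(\sqrt{2}|v_2|) > 10^3$ once $|b|$ is small enough. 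All the inequalities above are strict and depend only on the values of $DF$ on $R^j$ and $F(R^j)$, hence persist under small perturbations of $F_1$. I expect no genuine obstacle beyond the bookkeeping of the small parameters $\varepsilon, \zeta, |b|$; the only mildly subtle point is the one already flagged, namely that the expansion rate on $C^u$ is controlled by the weak direction $\lambda$ rather than by the horseshoe expansion $\sim 10^4$, which is exactly why the threshold $\frac{1+\lambda}{2}$ (as opposed to something larger) appears in the statement.
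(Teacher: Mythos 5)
Your proof is correct and takes exactly the paper's (much terser) approach: the paper merely displays the two Jacobian matrices, notes that $p'$ is close to $10^4$ while $q'$ and $b$ are small, and states that the result follows, whereas you supply the cone-invariance and expansion estimates it leaves implicit, including the key point that the expansion rate on $C^u$ is governed by the weak direction $\lambda$, which is why the threshold is $\tfrac{1+\lambda}{2}$. (One cosmetic caveat: when $|v_1|$ is tiny compared to $|v_3|$, the assertion that $|v_1'|$ is comparable to $|p'(z)||v_1|$ can fail since $|bv_2|$ may dominate, but in that regime $|v_3'|\approx\lambda|v_3|$ dominates $|v_2'|=|v_1|$ by an even larger factor, so your conclusion stands.)
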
 

\begin{proof}
Note that the differentials of $F$ and its inverse are close on $\bigsqcup_{j \in J} R^j$ and $\bigsqcup_{j \in J} F(R^j)$ to:
\[   \begin{pmatrix}
p' (z)& b & 0 \\
1 &  0   & 0 \\
    q' (z) & 0 &  \lambda
\end{pmatrix}  \qand \begin{pmatrix}
0 & 1 & 0 \\
1/b &  - p' (w) / b  & 0 \\
 0 &  - q' (w) & 1 / \lambda
\end{pmatrix}
 \]
where $p' $  is close to $1/\eta=10^4$ and both $q' $ and $b$ are small.  The result follows. 
\end{proof}

In the following result, we introduce the two hyperbolic basic sets whose continuations will be later involved in the formation of heterodimensional cycles.

\begin{proposition} \label{polynomeq} 
There exists a neighborhood $\mathcal{U}_1$ of $F_1$ in $\mathrm{Aut}(\mathbb{C}^3)$ such that every  $F \in   \mathcal{U}_1$  displays a    hyperbolic basic set
 \[ K = \bigcap_{n \in \mathbb{Z}} F^n ( \bigsqcup_{j \in J} R^j ) \] 
 of index $2$ included in $\mathbb{D}(0,1/2)^2 \times \mathbb{D}$  and a saddle fixed point $S$ of index $1$ close to $(3,3,3)$.  
\end{proposition}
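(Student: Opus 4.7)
The plan is to handle the hyperbolic basic set $K$ and the saddle fixed point $S$ separately; the main effort concerns the Markov structure that gives transitivity of $K$.

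For $K$: the set is compact, $F$-invariant, and locally maximal by construction, and \cref{coneproperties} supplies the $DF$-invariant two-dimensional cone $C^u$ (expanded by $(1+\lambda)/2 > 1$) together with the $DF^{-1}$-invariant one-dimensional cone $C^s$ (expanded by $10^3$). The standard cone field criterion \cite[Cor.~6.4.8]{KH} then yields hyperbolicity of $K$ with $\dim E^u = 2$, so $K$ has index $2$. To upgrade this to a \emph{basic} set I would produce a topological conjugacy between $F|_K$ and the full shift on $J^{\mathbb{Z}}$. The essential Markov property is that for every $j,k \in J$ the image $F(R^j)$ crosses $R^k$ completely in the unstable direction: $R^j$ is close to $\mathbb{D}_j \times \mathbb{D} \times \tfrac{1}{\lambda}(\mathbb{D} - \tfrac{1}{10}e^{ij\pi/2})$, and under $F_1$ the first coordinate $p(z)+bw$ covers $\mathbb{D}$ by \cref{corop}, the second coordinate $z$ sits in $\mathbb{D}_j$, and $\lambda t + q(z)$ lies in $\mathbb{D}$ by the precise values of $q$ on $\mathbb{D}_j$ prescribed in \cref{vartheta} (this is exactly why the $t$-component of $R^j$ was shifted by $-\tfrac{1}{10}e^{ij\pi/2}$ in the first place). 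Together with the cone expansion/contraction, each sequence in $J^{\mathbb{Z}}$ corresponds to a unique point of $K$, giving the desired conjugacy and hence transitivity. The localization $K \subset \mathbb{D}(0,1/2)^2 \times \mathbb{D}$ is then immediate: from $K \subset (\bigsqcup_{j \in J} R^j) \cap F(\bigsqcup_{j \in J} R^j)$, both the first coordinate and (since the second coordinate of $F(z,w,t)$ equals $z$) the second coordinate of any point of $K$ lie in $\bigcup_j \mathbb{D}_j \subset \mathbb{D}(0,1/4 + \eta)$, while the third coordinate lies in $\mathbb{D}$ by definition of $R^j$.

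For $S$: one checks that $F_1(3,3,3)$ is close to $(3,3,3)$ --- the first coordinate is $p(3) + 3b \approx 3$ since $p(3) \approx \ell_4(3) = 3$ by \cref{polynomep}, the second coordinate is $3$, and the third coordinate is $\lambda \cdot 3 + q(3) \approx \tfrac{10}{3} - \tfrac{1}{3} = 3$ since $q(3) \approx -\tfrac{1}{3}$ by \cref{vartheta}. The differential of $F_1$ at such a point has the form
\[ \begin{pmatrix} p'(z) & b & 0 \\ 1 & 0 & 0 \\ q'(z) & 0 & \lambda \end{pmatrix}, \]
which is block lower-triangular with eigenvalues $\lambda$ together with those of the Hénon $2 \times 2$ block $\bigl(\begin{smallmatrix} p'(z) & b \\ 1 & 0 \end{smallmatrix}\bigr)$. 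Since $p'(3) \approx \eta$ and $|b|$ is small, the two eigenvalues of that block have modulus $O(\max(\eta, |b|^{1/2}))$ and are therefore contained in the open unit disk, while the remaining eigenvalue is close to $\lambda = 10/9 > 1$. In particular $1$ is not an eigenvalue, so the implicit function theorem produces a true fixed point near $(3,3,3)$ for $F_1$, which persists and keeps its eigenvalue structure for every $F$ in a suitable neighborhood $\mathcal{U}_1$ of $F_1$; hence $S$ is a saddle of index $1$. In summary, all the analytic inputs are already in hand through \cref{corop,henon,vartheta,coneproperties,polynomep}, and the only non-routine step is the careful bookkeeping of the full-shift Markov structure on $K$.
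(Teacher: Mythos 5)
Your proof is correct and reaches the same conclusion, but it handles the transitivity of $K$ by a genuinely different route. The paper exploits the skew-product structure directly: it shows that over each $(z,w)\in K_o$ the fiber $\{(z,w,t)\}$ meets $K_1$ in exactly one point $(z,w,\xi(z,w))$ with $\xi$ continuous, so that $F_1|_{K_1}$ is semiconjugate (in fact conjugate, via the graph of $\xi$) to $H|_{K_o}$, and transitivity is simply inherited from the H\'enon horseshoe. You instead verify the Markov crossing property of the family $\{R^j\}_{j\in J}$ and obtain a conjugacy with the full shift on $J^{\mathbb Z}$. Both are valid; the paper's route is shorter because it leans on \cref{henon}, while yours is more self-contained (and in fact makes explicit the combinatorial structure that \cref{locmfd} later encodes via the coding $W^{\ss}$, $\ss\in J^{\N}$). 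A small imprecision in your Markov step: you write that $\lambda t + q(z)$ \emph{lies in} $\mathbb D$, but what you actually need for full crossing is that, as $t$ ranges over the $t$-extent of $R^j$, the quantity $\lambda t + q(z)$ \emph{covers} $\mathbb D$, and in particular covers the $t$-extent of each $R^k$; this does hold, by the same computation that makes $R^j$ close to $\mathbb D_j \times \mathbb D \times \tfrac1\lambda(\mathbb D - \tfrac1{10}e^{ij\pi/2})$, but the wording should be ``covers'' rather than ``lies in.''

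For the saddle point $S$, the paper builds a genuine fixed point of $F_1$ directly: it takes the H\'enon sink $S_o$ furnished by \cref{henon} and solves the third-coordinate fixed point equation explicitly as $t_1 = (1-\lambda)^{-1}q(S_o^1)$, so there is nothing approximate to correct. Your version invokes the implicit function theorem starting from the observation that $F_1(3,3,3)\approx(3,3,3)$ and $1\notin\operatorname{spec}D_{(3,3,3)}F_1$; strictly speaking this is a Newton--Kantorovich type argument rather than the IFT as usually stated, since you are not starting from an exact zero. It becomes a clean IFT application if you view $F_1$ as a perturbation of the model $(z,w,t)\mapsto(\ell_4(z),z,\lambda t - 1/3)$, which has $(3,3,3)$ as an exact saddle fixed point, and continue it in the perturbation parameters. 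Your eigenvalue analysis via the block lower-triangular structure is correct and essentially what the paper does. Finally, your explicit verification that $K\subset \mathbb D(0,1/2)^2\times\mathbb D$ (via $K\subset(\bigsqcup_j R^j)\cap F(\bigsqcup_j R^j)$ together with the second coordinate of $F$ being $z$) is a detail the paper leaves implicit, and it is a welcome addition.
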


\begin{proof}
{\bf We first show that $K_1 = K(F_1)$ is a hyperbolic basic set of index $2$.}
The set $K_1$ is clearly compact, $F_1$-invariant  and  locally maximal. By \cref{coneproperties} and by the cone field criterion, the set $K_1$ is also hyperbolic with unstable (resp. stable) dimension equal to  $2$ (resp. $1$). 
Finally, let us show that $K_1$ is  transitive. 

\begin{fact}
For every $(z,w) \in K_o$, the set $K_1$ intersects the line $\{ (z,w,t) \,  | \, t \in \mathbb{C} \}$ at exactly one point $(z,w,\xi(z,w))$, and the map $(z,w) \in K_o \mapsto \xi(z,w) \in \mathbb{C}$ is continuous.
\end{fact}

\begin{proof} 
We fix $(z,w) \in K_o$. Let us denote for every $n \in \mathbb{Z}$ by $I_n$ the third coordinate projection of $F_1^n ( \{H^{-n}(z,w)\} \times \mathbb{D})$. We notice that $I_n$ is a disk of diameter $\lambda^n$. Also $F_1 ( \{H^{-n-1}(z,w)\} \times \mathbb{D})$ is close to  $ \{H^{-n}(z,w)\} \times  ( \lambda \cdot \mathbb{D} +  \frac{1}{10}  e^{i \frac{j \pi}{2}} )$ for some $j \in J$, with $( \lambda \cdot \mathbb{D} +  \frac{1}{10}  e^{i \frac{j \pi}{2}} ) \Supset \mathbb{D}$. Therefore we have $I_n \Subset I_{n+1}$ for every $n \in \mathbb{Z}$. Thus $\bigcap_{n \in \mathbb{Z}} I_n$ is reduced to a point $\xi(z,w) \in \mathbb{D}$. But $\{ (z,w) \} \times \bigcap_{n \in \mathbb{Z}} I_n$ is precisely equal to the intersection of $K_1$ with $\{ (z,w,t) \,  | \, t \in \mathbb{C} \}$.  

Then the continuity of $\xi$ follows from the continuity of $F_1$.    
\end{proof}

We recall that $K_o$ is transitive for the H\'enon map $H$. 
We can therefore pick $(z,w) \in K_o$ such that $\{ H^n(z,w) \, | \, n \ge 0 \}$ is dense in $K_o$. Then the sequence $\{  F^n_1(z,w,\xi(z,w)) \, | \, n \ge 0 \}$ is dense in $K_1$. Indeed, for every $(\alpha,\beta,\xi(\alpha,\beta)) \in K_1$, we can find $H^n(z,w)$ arbitrarily close to $(\alpha,\beta)$. Since $\xi$ is continuous, then $\xi(H^n(z,w))$ is close to $\xi(\alpha,\beta)$. Thus $F_1^n(z,w,\xi(z,w)) = (H^n(z,w), \xi(H^n(z,w)) $ is close to $(\alpha,\beta,\xi(\alpha,\beta))$. This shows that $K_1$ is transitive for $F_1$. 

 This shows that  $K_1$ is a  hyperbolic basic set of index $2$ for $F_1$. By robustness of hyperbolic basic sets, the set $K$ is still a hyperbolic basic set of index $2$ for $F$ for every $F$ close to $F_1$. \\

{\bf Now we show that $F$ displays a saddle  fixed point $S$ of index $1$.} 
Recall that $H$ displays an attracting fixed point $S_o = (S_o^1 , S_o^2)$  close to $(3,3)$ by \cref{henon}. Since $q$ is close to $-1/3$ on $\mathbb{D}_4$, the number $t_1 := (1-\lambda)^{-1} \cdot q(S_o^1)$ is close to $-9 \cdot (-1/3) = 3$   and $S_1 = (S_o , t_1)$ is a  fixed point for $F_1$ close to $(3,3,3)$. Moreover the differential of $F_1$ at $S_1$ is close to: 
\[\begin{pmatrix}
p' (3)& b & 0 \\
1 & 0  & 0 \\
    q' (3) & 0 &  \lambda
\end{pmatrix}
\]
where $p' (3)$  is close to $\eta = 10^{-4}$ and both $q' (3)$ and $b$ are small. Thus $D_{S_1} F_1 $ has two eigenvalues 
smaller than 1 in modulus and one larger than 1 in modulus. Thus $F_1$ displays a  saddle fixed point $S_1$ of index $1$ close to $(3,3,3)$. The result follows by robustness of hyperbolic basic sets. 
\end{proof}

\begin{definition}
For every $k \ge 1$ and  $\sw =( j_0  \cdots    j_k) \in J^k$, we define the following set:
\[ R^\sw := \bigcap_{0 \le n \le k} F^{-n}(R^{j_{n}}) \, .  \]
\end{definition}

We also denote:
\[ \partial_z R^\sw := R^\sw \cap  F^{-k-1}( \partial \mathbb{D} \times \mathbb{D}^2) \, ,  \quad  \partial_w R^\sw := R^\sw \cap  ( \mathbb{D} \times \partial \mathbb{D} \times \mathbb{D})  \qand \partial_t R^\sw := R^\sw \cap F^{-k-1}( \mathbb{D}^2 \times \partial \mathbb{D}) \, .  \]

\begin{proposition} \label{locmfd}
For every $\ss = (j_k)_{k \ge 0} \in J^\N$, the set $W^\ss := \bigcap_{ n \ge  0}  R^{ (j_0, \cdots , j_n)}$ is a local stable manifold of $K$ which is a holomorphic graph over $w \in  \mathrm{int}  \,  \mathbb{D}$ with tangent spaces  in $C^{s}$.

For every $\su  = ( j_{k})_{k<0}  \in J^{\Z_-^*}$, the set $W^\su := \bigcap_{ n>0} F^n( R^{ ( j_{-n},  \cdots , j_{-1})  })$ is a local unstable manifold of $K$ which is a holomorphic  graph over $(z,t) \in (  \mathrm{int}  \,  \mathbb{D})^2$ with tangent spaces  in $C^{u}$. 
\end{proposition}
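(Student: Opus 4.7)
The plan is to construct $W^\ss$ and $W^\su$ via graph transforms adapted to the holomorphic category; I describe the stable case, since the unstable one is symmetric.

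I would work in the space $\mathcal{G}^s_j$ of holomorphic maps $\gamma : \mathbb{D} \to \mathbb{D}^2$, $\gamma(w) = (z(w), t(w))$, whose graph $w \mapsto (z(w), w, t(w))$ lies in $R^j$ and has tangent in $C^s$, i.e.\ $\|(z'(w), t'(w))\|_2 \le 10^{-3}|b|$. For each pair $j_0, j_1 \in J$, I define the graph transform $\Phi_{j_0, j_1} : \mathcal{G}^s_{j_1} \to \mathcal{G}^s_{j_0}$ by declaring $\Phi_{j_0, j_1}(\gamma_1) = \gamma_0$ to be the unique graph satisfying $F(z_0(w), w, t_0(w)) \in \mathrm{graph}(\gamma_1)$ for every $w \in \mathbb{D}$. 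Using $F(z, w, t) = (p(z) + bw, z, \lambda t + q(z))$ and matching the second coordinate, this reduces to solving
\begin{equation*}
p(z_0(w)) - z_1(z_0(w)) = -bw, \qquad t_0(w) = \big(t_1(z_0(w)) - q(z_0(w))\big)/\lambda.
\end{equation*}
The strong expansion $|p'| \gtrsim 1/\eta$ on $\mathbb{D}_{j_0}$ from \cref{polynomep} combined with $|z_1'| \le 10^{-3}|b|$ makes $z \mapsto p(z) - z_1(z)$ a holomorphic biholomorphism from a subdomain of $\mathbb{D}_{j_0}$ onto a neighborhood of $|b|\mathbb{D}$, yielding a unique $z_0$; the induced derivative bounds confirm $\gamma_0 \in \mathcal{G}^s_{j_0}$.

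A direct differentiation of the implicit equations shows that $\Phi_{j_0, j_1}$ is a strict $C^0$-contraction with rate bounded by $\max(\eta, 1/\lambda) < 1$, the factor $\eta$ coming from the $p$-expansion in $z$ and $1/\lambda$ from the $\lambda$-expansion in $t$. For any $\ss = (j_n)_{n \ge 0} \in J^\N$, the iterates $\Phi_{j_0, j_1} \circ \cdots \circ \Phi_{j_{n-1}, j_n}$ applied to an arbitrary starting graph therefore form a Cauchy sequence in $\mathcal{G}^s_{j_0}$ converging uniformly to a unique limit $\gamma_\ss$; Montel's theorem then ensures $\gamma_\ss$ is holomorphic. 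By construction, the graph of $\gamma_\ss$ is contained in $\bigcap_n R^{(j_0, \ldots, j_n)} = W^\ss$. For the reverse inclusion, a standard cone-field argument applies: two distinct points in $W^\ss$ with the same $w$-coordinate would be joined by a segment with tangent in $C^u$, and the $DF$-expansion of $C^u$ from \cref{coneproperties} would force the $F^n$-image of this segment to have length growing geometrically, contradicting the fact that it stays in the bounded region $\mathbb{D}^3$.

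The unstable case is handled analogously using holomorphic graphs $(z, t) \in \mathbb{D}^2 \mapsto w(z, t) \in \mathbb{D}$ with tangent in $C^u$, together with a graph transform in the direction of $F$ (rather than $F^{-1}$) that exploits the $DF$-expansion of the $(z, t)$-directions from \cref{coneproperties}. The only substantive technical point is to verify that the transformed graph remains defined over all of $\mathbb{D}^2$, which follows from the shape of $F(R^j)$ together with the expansion estimates. I expect the main technical step throughout to be the careful contraction estimate for the graph transform, which is where all the coordinate dependencies must be tracked simultaneously.
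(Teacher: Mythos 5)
Your graph-transform argument is correct in substance but takes a genuinely different route from the paper. The paper's proof is much softer: it uses the cone estimates of \cref{coneproperties} to see that $W^\ss$ meets each slice $\{w=w_o\}$ in exactly one point (hence is a continuous graph over $w$), then observes that for $M\in W^\ss$ the iterate $F^n(M)$ lies in $\bigcap_{-n\le k\le n}F^k(\bigsqcup_{j\in J}R^j)$, a neighborhood of $K$ shrinking to $K$, so that $W^\ss$ is a local stable manifold of $K$; holomorphy is then inherited from the fact that it is a stable manifold of a hyperbolic set of a holomorphic diffeomorphism. Your construction instead produces the graph as the limit of a contracting holomorphic graph transform and gets holomorphy directly from Montel's theorem, which is more self-contained on that point; your identification of the limit graph with $W^\ss$ via expansion of $C^u$ is the same uniqueness mechanism the paper invokes. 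What the paper's route buys is brevity; what yours buys is an explicit, quantitative construction that does not appeal to the general stable manifold theory.

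Two caveats. First, the implicit equations you solve use the formula $F(z,w,t)=(p(z)+bw,z,\lambda t+q(z))$, which is valid only for $F=F_1$, whereas the proposition is used for every $F\in\mathcal U_1$; the transform and its contraction rate must be derived from the cone and expansion properties of \cref{coneproperties} alone (or you must argue that the contraction persists under $C^1$-perturbation). Second, you do not verify the clause that $W^\ss$ is a local \emph{stable} manifold of $K$, i.e.\ that forward orbits of its points accumulate on $K$; this is exactly the paper's observation above and needs a sentence. Finally, your uniqueness step speaks of a ``segment with tangent in $C^u$'' inside $W^\ss$, which is not a priori a curve; the correct formulation is that the difference vector of two points over the same $w_o$ lies in $C^u$ and that the cone invariance of \cref{coneproperties} holds in the mean-value (difference-of-points) sense, which the margin in the cone estimates provides.
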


\begin{proof}
By \cref{coneproperties}, for every $\ss \in J^\N $, the set $W^\ss$ intersects each $\{ w = w_o \}$, with $w_o \in \mathbb{D}$, at exactly one point depending continuously on $w_o$. Thus $W^{\ss}$ is a continuous graph over $w \in \mathbb{D}$.

Let $M \in W^\ss$. For every $n \ge 0$, notice that $F^n(M)$ belongs to the set $F^n(R^{ (\ss_1, \cdots , \ss_n)}) \cap R^{ (\ss_{n+1} , \cdots , \ss_{2n})} $. The latter is included in $\bigcap_{-n \le k \le n } F^k ( \bigsqcup_{j \in J} R^j ) $, which is a small neighborhood of $K$. Thus the distance of $F^n(M)$ to $K$ is small when $n$ is large and so $W^\ss$ is a local stable manifold of $K$. In particular it is a holomorphic graph. Still by \cref{coneproperties}, its tangent spaces are in $C^s$. 

The same proof is working for unstable manifolds. 
\end{proof}

We also define:
\[ W^s_{loc} (K) = \bigcup_{ \ss \in J^\N   } W^\ss \qand   W^u_{loc} (K) = \bigcup_{ \su \in   J^{\Z_-^*}  }  W^\su  \, . \] 

The saddle point $S$ and its stable/unstable manifolds will be of particular interest to us:

\begin{definition} 
The local stable (resp. unstable) manifold $W^s_{loc}(S)$ (resp. $W^u_{loc}(S)$) of $S$ is the connected component of $W^s(S) \cap \mathbb{D}(3,1)^3$ (resp. $W^u(S) \cap  \mathbb{D}(3,1)^3$) containing $S$.  
\end{definition}

Up to restricting the neighborhood $\cU_1$ of $F_1$ defined by \cref{polynomeq}, we obtain:

\begin{fact}  \label{locsmfdS}
The local stable manifold $W^s_{loc}(S)$ of the saddle point $S$  is  $C^1$-close to:
\[\{ (z, w , 3 ) \, | \, z \, , w \in  \mathbb{D}(3 ,1)     \} \, .  \] 
\end{fact}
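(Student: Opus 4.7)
The plan is first to construct $W^s_{loc}(S_1)$ explicitly for the unperturbed map $F_1$ using its skew-product structure, and then to pass to nearby $F \in \cU_1$ by a graph-transform argument together with the standard stability of hyperbolic fixed points.

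For $F_1$, I would look for a forward-invariant graph of the form $\Gamma = \{(z,w,\psi(z,w)) : (z,w) \in \mathbb{D}(3,1)^2\}$. Since $F_1(z,w,t) = (H(z,w), \lambda t + q(z))$, the invariance of $\Gamma$ is equivalent to the functional equation $\psi(H(z,w)) = \lambda \psi(z,w) + q(z)$, i.e.
\[ \psi(z,w) = \tfrac{1}{\lambda}\bigl( \psi(H(z,w)) - q(z) \bigr). \]
Because $H$ sends $\mathbb{D}(3,1)^2$ strictly into itself (\cref{henon}) and $1/\lambda = 9/10 < 1$, the right-hand side defines a contraction on $C^0(\mathbb{D}(3,1)^2, \mathbb{C})$ whose unique fixed point $\psi_1$ yields the desired invariant graph. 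The constant $\psi \equiv 3$ would solve this equation if $q$ were identically $-1/3$; since \cref{vartheta} gives $|q+1/3| < \zeta$ and $|q'| < \zeta$ on $\mathbb{D}_4 \supset \mathbb{D}(3,1)$, continuous dependence of the fixed point of a contraction on parameters forces $\psi_1$ to be $C^0$-close to $3$, and a parallel contraction argument for $D\psi_1$ (obtained by differentiating the functional equation, using that $DH$ is bounded on $\mathbb{D}(3,1)^2$) yields $\|D\psi_1\|_\infty = O(\zeta)$. Iterating the third coordinate gives the identity $t_n - \psi_1(H^n(z,w)) = \lambda^n(t - \psi_1(z,w))$, which, combined with $\lambda > 1$, shows that any forward $F_1$-orbit remaining in $\mathbb{D}(3,1)^3$ must lie on $\Gamma$; conversely, points of $\Gamma$ have orbits converging to $S_1$ since $\mathbb{D}(3,1)^2$ is contained in the basin of $S_o$ (\cref{henon}). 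Hence $\Gamma$ coincides with the connected component of $W^s(S_1) \cap \mathbb{D}(3,1)^3$ containing $S_1$, i.e. $\Gamma = W^s_{loc}(S_1)$.

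For $F \in \cU_1$ sufficiently close to $F_1$, the saddle fixed point $S$ persists and varies continuously by the implicit function theorem, the third coordinate of $F$ is $C^1$-close to $\lambda t + q(z)$, and the projection of $F$ to the first two coordinates still sends $\mathbb{D}(3,1)^2$ into itself. The same graph-transform setup then produces a unique $F$-invariant holomorphic graph $\psi_F$ over $\mathbb{D}(3,1)^2$, $C^1$-close to $\psi_1$, and hence $C^1$-close to the constant $3$. The main technical point to verify is that this invariant graph really coincides with the local stable manifold as defined (the connected component of $W^s(S) \cap \mathbb{D}(3,1)^3$ containing $S$) rather than merely a small piece of it; for $F_1$ this comes from the explicit identity above, and for nearby $F$ it follows by restricting $\cU_1$ enough that $\psi_F$ remains $C^1$-close to $3$ throughout $\mathbb{D}(3,1)^2$, so that its graph is contained in $\mathbb{D}(3,1)^3$ and the same $\lambda^n$-expansion argument rules out any point of $W^s(S) \cap \mathbb{D}(3,1)^3$ lying off the graph.
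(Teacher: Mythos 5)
Your argument is correct and rests on the same key observation the paper uses: for the reference version of $F_1$ in which $q$ is replaced by the constant $-1/3$ near $\mathbb{D}(3,1)$, the local stable manifold of $S_1$ is exactly the flat bidisk $\{(z,w,3):z,w\in\mathbb{D}(3,1)\}$, and the actual map is a $C^1$-small perturbation of this reference map on the relevant region. The paper simply cites hyperbolic continuation of local stable manifolds as a black box; you unpack that black box into an explicit graph-transform (fixed-point) argument, which makes the proof self-contained and quantitative at the cost of length. One small sharpening worth noting: in your $C^1$ contraction step it is not merely boundedness of $DH$ that is needed but the strict inequality $\sup_{\mathbb{D}(3,1)^2}\|DH\|<\lambda=10/9$; this does hold here, since on $\mathbb{D}(3,1)^2$ one has $|p'|\approx 10^{-4}$ and $|b|$ small, so $\|DH\|$ is close to $1$, but ``bounded'' alone would not suffice. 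Likewise, the concluding identification of $\Gamma$ with the connected component of $W^s(S)\cap\mathbb{D}(3,1)^3$ containing $S$ uses implicitly that $\partial\Gamma$ sits over $\partial\mathbb{D}(3,1)^2$, i.e.\ in $\partial(\mathbb{D}(3,1)^3)$, so no other sheet of $W^s(S)$ can attach to $\Gamma$ inside the open bidisk; this is fine, but is the point the paper hides in the phrase ``hyperbolic continuation of local stable manifolds.''
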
 

\begin{proof}  
This is an immediate consequence of the hyperbolic continuation of local stable manifolds (indeed, for $F=F_1$ with $q=-1/3$ on  $\mathbb{D}_4$, $W^s_{loc}(S_1)$ is simply equal to $\{ (z , w , 3) \, | \, \, z \, , w \in  \mathbb{D}(3,1)   \}$ since the basin of the sink $S_o$ of the H\'enon map $H$ contains $\mathbb{D}(3,1)^2$ by \cref{henon}). 
\end{proof}

\begin{fact} \label{factunsts}
The local unstable manifold $W^u_{loc}(S)$ of the saddle point $S$  is  $C^1$-close to:
\[\{ (3 , 3    ,  3 +  t ) \, | \, t \in   \mathbb{D}    \}  \, . \] 
\end{fact}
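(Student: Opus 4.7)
The proof follows the template of \cref{locsmfdS}. I would again appeal to the $C^1$-continuity of the hyperbolic continuation of local unstable manifolds at a saddle fixed point, so that it suffices to verify the statement for the model $F=F_1$ with $q\equiv -1/3$ on $\mathbb{D}_4$ and then shrink $\cU_1$ if necessary.

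For this model, $F_1$ restricts on $\mathbb{D}(3,1)^3$ to the skew product
\[ F_1(z,w,t) = (H(z,w),\, \lambda t - 1/3), \]
whose fiber dynamics $t \mapsto \lambda t - 1/3$ has $3$ as its unique, expanding fixed point (since $\lambda\cdot 3 - 1/3 = 3$). I would then check that $W^u_{loc}(S_1) = \{S_o\} \times \mathbb{D}(3,1)$. The easy inclusion $\supseteq$ holds because the inverse fiber map $t \mapsto (t+1/3)/\lambda$ is a contraction of $\mathbb{D}(3,1)$ with fixed point $3$, so the $F_1$-backward orbit of any point of this vertical disk converges to $S_1 = (S_o, 3)$. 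For the reverse inclusion, $S_o$ is an attracting fixed point of the \emph{invertible} map $H$ by \cref{henon}, hence $W^u(S_o) = \{S_o\}$; consequently any point whose $F_1$-backward orbit accumulates on $S_1$ must have its first two coordinates equal to $S_o$. Since $S_o$ is close to $(3,3)$, this vertical holomorphic disk is $C^1$-close to $\{(3,3,3+t) : t \in \mathbb{D}\}$.

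No step poses a real obstacle. The only observation specific to the unstable case is the triviality $W^u(S_o) = \{S_o\}$ for a sink of an invertible map, which forces the unstable manifold of $S_1$ to lie entirely in the vertical fiber over $S_o$; everything else is in strict analogy with the proof of \cref{locsmfdS}.
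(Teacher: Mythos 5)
Your proof is correct and takes essentially the same route as the paper's one-line argument: invoke $C^1$-continuity of local unstable manifolds under perturbation and compute the model case $q \equiv -1/3$ exactly, merely fleshing out the details. The paper asserts without explanation that the model's $W^u_{loc}(S_1)$ is the vertical disk; your observation that $W^u(S_o)=\{S_o\}$ for the sink of the invertible H\'enon map $H$ (combined with the skew-product structure) is precisely the missing reason, and you are in fact a bit more careful than the paper in noting that the disk sits over $S_o$ rather than exactly over $(3,3)$, the closeness of $S_o$ to $(3,3)$ supplying the remaining $C^1$-proximity.
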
 

\begin{proof}
This is an immediate consequence of the hyperbolic continuation of local unstable manifolds (indeed, for $F=F_1$ with $q=-1/3$ on  $\mathbb{D}_4$, $W^u_{loc}(S_1)$ is simply equal to $\{ (3 , 3 , 3 +  t ) \, | \, \,t \in  \mathbb{D}  \}$). 
\end{proof}

\begin{remark}  \label{remU1}
The set  $\mathcal{U}_1$ can be taken as the $\epsilon_1$-$C^0$-neighborhood of $F_1$ on some small neighborhood of $W^s_{loc} (K_1) \cup  F_1^{-1} (W^u_{loc} (K_1)) \cup W^s_{loc} (S_1) \cup F_1^{-1} (W^u_{loc} (S_1))$, for some small $\epsilon_1 > 0$. 
\end{remark}

\section{Blender property}

In this section, we show that the compact hyperbolic set $K$ for the automorphism $F  \in   \mathcal{U}_1$ displays the  blender property.
We first define the following constant cone field:
\[ C^{uu} = \{v = (v_{1},v_{2},v_{3}) \in \mathbb{C}^{3} : \| (v_{2},v_{3}) \|_2  \le     10^{-3} \cdot   |v_{1}|\} \, .     \]

\begin{lemma} \label{coneproperties2}
For every  $F  \in   \mathcal{U}_1$,   any  non zero vector in   $C^{uu}$  is sent into $\mathrm{int} \, C^{uu}$ and expanded by a factor  larger than $10^3$ by $D_{ (z,w,t) } F$ for any  
 $(z,w,t) \in \bigsqcup_{j \in J} R^j$.
\end{lemma}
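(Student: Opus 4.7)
The plan is to argue exactly as in the proof of \cref{coneproperties}, taking advantage of the fact that $C^{uu}$ is a strict sub-cone of $C^{u}$: any vector whose second \emph{and} third components are small compared to the first coordinate is in particular a vector whose second component is small compared to $\|(v_1,v_3)\|_2$. I first reduce to the case $F = F_1$ by continuity: since $\mathcal{U}_1$ is a $C^0$-neighborhood of $F_1$ (\cref{remU1}) and the maps are holomorphic, Cauchy estimates on a slightly enlarged neighborhood of $\bigsqcup_{j \in J} R^j$ convert $C^0$-closeness into $C^1$-closeness of the differentials. Thus $D_{(z,w,t)} F$ is as close as one wishes, uniformly over $\bigsqcup_{j \in J} R^j$, to the explicit matrix
\[
\begin{pmatrix} p'(z) & b & 0 \\ 1 & 0 & 0 \\ q'(z) & 0 & \lambda \end{pmatrix},
\]
in which $|p'(z)|$ is close to $1/\eta = 10^4$ while $|b|$ and $|q'(z)|$ are arbitrarily small (using \cref{polynomep} and \cref{vartheta} with $\varepsilon$, $\zeta$, $|b|$ chosen small enough at the outset).

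Next, given a nonzero $v = (v_1, v_2, v_3) \in C^{uu}$, so that $\|(v_2, v_3)\|_2 \le 10^{-3} |v_1|$ (which forces $v_1 \neq 0$), I evaluate the three components of $v' = D_{(z,w,t)} F \cdot v$. The first, $v_1' \approx p'(z) v_1 + b v_2$, has modulus at least $\tfrac{1}{2} \cdot 10^4 \cdot |v_1|$; the second is $v_2' = v_1$; and the third, $v_3' \approx q'(z) v_1 + \lambda v_3$, is bounded by $(\zeta + \lambda \cdot 10^{-3})|v_1| \le \tfrac{1}{2}|v_1|$. Consequently $\|(v_2', v_3')\|_2 \le \sqrt{5/4}\,|v_1|$, while $10^{-3} |v_1'| \ge 5 |v_1|$, yielding the strict cone inclusion $v' \in \mathrm{int}\, C^{uu}$. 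The expansion factor is controlled by $\|v'\|_2 / \|v\|_2 \ge |v_1'| / \|v\|_2 \ge (\tfrac{1}{2} \cdot 10^4)/\sqrt{1 + 10^{-6}} > 10^3$, as required.

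The proof is a routine cone-field verification parallel to \cref{coneproperties}, so I do not anticipate any serious obstacle. The one point to be mindful of is that the ample slack in the above estimates must simultaneously absorb the $O(\varepsilon)$ error from \cref{polynomep}, the $O(\zeta)$ error from \cref{vartheta}, and the $O(\epsilon_1)$ deviation of $DF$ from $DF_1$ for $F \in \mathcal{U}_1$. This is handled by fixing $\varepsilon$, $\zeta$, $|b|$, and $\epsilon_1$ small enough at the beginning, a choice compatible with all previous constructions.
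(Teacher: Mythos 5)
Your proof is correct and follows essentially the same route as the paper, which simply refers the reader back to the matrix computation in the proof of \cref{coneproperties}: one records the approximate form of $D_{(z,w,t)}F$ on $\bigsqcup_{j\in J} R^j$, notes that $|p'|$ is close to $10^4$ while $|b|$, $|q'|$ are small, and verifies the cone invariance and expansion by direct estimation. Your explicit bounds ($|v_1'| \ge \tfrac{1}{2}\cdot 10^4 |v_1|$, $\|(v_2',v_3')\|_2 \le \sqrt{5/4}\,|v_1|$) are sound and in fact spell out exactly what the paper leaves implicit.
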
 

\begin{proof}
The proof is exactly the same  as in the proof of \cref{coneproperties}. 
\end{proof}

\begin{definition}
A $uu$-curve  is a continuous graph $\cC = \{ (z , c_2 (z), c_3 (z) ) \, | \,     z \in \mathbb{D} \} \subset  \mathbb{D} \times (\mathrm{int}  \, \mathbb{D} )^2 $  such that  $z \mapsto (c_2(z), c_3(z))$ is holomorphic on $\mathrm{int} \, \mathbb{D}$ and $ (1, c'_2 (z), c'_3 (z) ) \in C^{uu}$ for every $z \in \mathrm{int}  \,  \mathbb{D}$.
\end{definition} 

Here is the main result of this section. It is the aforementioned blender property.

\begin{proposition} \label{robustmain}  
Every $uu$-curve $\cC \subset \mathbb{D}^2 \times \mathbb{D}(0,1/2)$ intersects $W^s_{loc}(K)$ for every   $F  \in   \mathcal{U}_1$.   
\end{proposition}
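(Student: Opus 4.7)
The plan is to establish the blender property by a self-similar covering of the space of $uu$-curves inside $\mathbb{D}^2 \times \mathbb{D}(0,1/2)$. Writing $c_j := \frac{1}{10} e^{ij\pi/2}$ for the values targeted by $q$ on $\mathbb{D}_j$, I intend to show that, starting from any $uu$-curve $\cC = \cC_0$, one can inductively pick indices $j_0, j_1, \ldots \in J$ so that each forward image $\cC_{n+1} := F(\cC_n \cap R^{j_n})$, once reparameterized by the new first coordinate, is again a $uu$-curve in $\mathbb{D}^2 \times \mathbb{D}(0,1/2)$. A nested-preimage argument will then extract the desired intersection with $W^s_{loc}(K)$.

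The engine of the construction is a covering property for the affine maps $t \mapsto \lambda t + c_j$, fine-tuned by the choices $\lambda = 10/9$ and $|c_j| = 1/10$: every $c \in \mathbb{D}(0,1/2)$ should admit some $j \in J$ with $\lambda c + c_j \in \mathbb{D}(0, 1/2 - \epsilon_0)$ for a fixed $\epsilon_0 > 0$. Equivalently, the four disks $\mathbb{D}(-c_j/\lambda, (1/2 - \epsilon_0)/\lambda)$, with centers at $\pm \tfrac{9}{100}$ and $\pm \tfrac{9}{100} i$ and radii $\tfrac{9}{20} - \tfrac{9}{10}\epsilon_0$, must cover $\mathbb{D}(0,1/2)$. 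The worst case occurs at $|c| = 1/2$ in a diagonal direction ($\arg c \equiv \pi/4 \bmod \pi/2$), where the distance to the nearest center is $\sqrt{(1/(2\sqrt 2) - 9/100)^2 + 1/8} \approx 0.441$, leaving strict slack $\approx 9 \cdot 10^{-3}$; any $\epsilon_0 \le 10^{-2}$ is admissible.

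Given this, the inductive step runs as follows. For $\cC_n = \{(z, c_{2,n}(z), c_{3,n}(z)) : z \in \mathbb{D}\}$ with $c_{3,n}$ valued in $\mathbb{D}(0,1/2)$, I pick $j_n$ by applying the covering property to $c_{3,n}(0)$. Since $|c'_{3,n}| \le 10^{-3}$ on $\mathbb{D}$, since $|q - c_{j_n}|, |q'| < \zeta$ on $\mathbb{D}_{j_n}$ by \cref{vartheta}, and since $F$ is $\mathcal{U}_1$-close to $F_1$ by \cref{remU1}, the new third coordinate $\lambda c_{3,n}(z) + q(z)$ differs from $\lambda c_{3,n}(0) + c_{j_n}$ by a total error much smaller than $\epsilon_0$, so it remains in $\mathbb{D}(0,1/2)$. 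The reparameterization $Z = p(z) + b c_{2,n}(z)$ is a biholomorphism from a domain close to $\mathbb{D}_{j_n}$ onto $\mathbb{D}$ by \cref{corop}, with inverse $z(Z)$ whose derivative is $O(\eta)$ since $|p'|$ is close to $1/\eta$ on $\mathbb{D}_{j_n}$; combined with $|c'_{3,n}|, |q'| \ll 1$ this forces the tangent of $\cC_{n+1}$ safely into $C^{uu}$.

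To conclude, pulling each $\cC_n$ back to $\cC$ along the branches of $F^{-n}$ associated to $(j_0, \ldots, j_{n-1})$ yields a nested decreasing sequence of closed topological disks $A_n \subset \mathbb{D}$ in the $z$-parameter space of $\cC$, with diameters shrinking geometrically at rate $O(\eta^n)$. Cantor's intersection theorem provides a unique $z_\infty \in \bigcap_n A_n$, and the point $(z_\infty, c_2(z_\infty), c_3(z_\infty)) \in \cC$ satisfies $F^k(\cdot) \in R^{j_k}$ for all $k \ge 0$, so it lies in $W^\ss \subset W^s_{loc}(K)$ for $\ss := (j_k)_{k \ge 0} \in J^\N$. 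The delicate point, which is precisely what the paper's calibration of constants has been set up for, is that the covering slack $\epsilon_0 \approx 10^{-2}$ must strictly dominate all accumulated first-order errors at every inductive step: the $10^{-3}$ $uu$-tangent bound, the approximation error $\zeta$, the smallness of $b$, and the $\mathcal{U}_1$-perturbation. Once this is verified, everything else is careful but standard bookkeeping.
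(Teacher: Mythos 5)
Your proposal is correct and follows essentially the same route as the paper's: the covering property of the four affine $t$-contractions (the paper's \cref{sublemme2}), an inductive step that a well-chosen branch $F(\cC\cap R^j)$ remains a $uu$-curve in $\mathbb{D}^2\times\mathbb{D}(0,1/2)$ (the paper's \cref{robust}, split into \cref{sublemme1} and \cref{sublemme3}, where the open-mapping argument makes your ``biholomorphism onto $\mathbb{D}$'' reparameterization claim rigorous), and a nested-compact-intersection argument landing on a leaf $W^\ss\subset W^s_{loc}(K)$. The only wobble is numerical: the covering slack at the worst diagonal point is $\approx 0.45-0.441=9\cdot10^{-3}$, so ``$\epsilon_0\le10^{-2}$'' holds only by about $2\cdot10^{-5}$ -- the paper's implicit choice $\epsilon_0\approx3\cdot10^{-3}$ (from $\tau\in\mathbb{D}(0,0.497)$) leaves safer headroom for the per-step errors, which are $O(10^{-3})$.
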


The key to prove \cref{robustmain} is the following one-dimensional open covering property stated in  \cref{sublemme2}. Let us first  define the four following affine maps:  
 \[ L_0 (t) := \frac{9}{10} t - \frac{9}{100} \, ,     \hspace{0.5cm}  L_1 (t) := \frac{9}{10} t - \frac{9i}{100}   \, ,    \hspace{0.5cm} 
 L_2 (t) := \frac{9}{10} t + \frac{9}{100}    \qand  L_3 (t) := \frac{9}{10} t + \frac{9i}{100}   \,  .  \]  
 We remark that the third coordinate of $F^{-1}$ is close to $L_j$ on $F(R^j)$.

\begin{sublemma}  \label{sublemme2}
For any $t \in \mathbb{D}(0,1/2)$, there exists $\tau \in \mathbb{D}(0, 0.497)$ and $j \in J$ such that $t = L_j(\tau)$. 
\end{sublemma}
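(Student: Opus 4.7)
I would rewrite the statement as a covering property for four Euclidean disks in $\mathbb{C}$. Each $L_j$ is the affine map $\tau \mapsto \tfrac{9}{10}\tau + c_j$, with $c_j$ ranging over the four points $\{\pm \tfrac{9}{100},\ \pm \tfrac{9i}{100}\}$; solving $t = L_j(\tau)$ gives $\tau = \tfrac{10}{9}(t - c_j)$, so the condition $\tau \in \mathbb{D}(0, 0.497)$ is equivalent to $|t - c_j| \le \tfrac{9}{10}\cdot 0.497 = 0.4473$. Hence the sublemma reduces to showing that the four closed disks $\mathbb{D}(c_j, 0.4473)$, $j \in J$, cover $\mathbb{D}(0, 1/2)$.

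The configuration of centers is invariant under the rotation $z \mapsto iz$, which cyclically permutes the $c_j$. By this $\mathbb{Z}/4$-symmetry it is enough to check that every $t = r e^{i\theta} \in \mathbb{D}(0, 1/2)$ with $|\theta| \le \pi/4$ lies within distance $0.4473$ of $c_2 = \tfrac{9}{100}$. The squared distance
\[ \bigl|t - \tfrac{9}{100}\bigr|^{2} = r^{2} - \tfrac{9}{50}\, r\cos\theta + \tfrac{81}{10000} \]
is decreasing in $\cos\theta$, so on the sector $|\theta|\le \pi/4$ it is maximized at $\theta = \pm\pi/4$, yielding the upward quadratic $\varphi(r) = r^{2} - \tfrac{9\sqrt{2}}{100}\, r + \tfrac{81}{10000}$. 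Since the critical point $r = \tfrac{9\sqrt{2}}{200} \approx 0.064$ lies in $(0, 1/2)$ and is a minimum of $\varphi$, the maximum on $[0, 1/2]$ is attained at $r = 1/2$, where $\varphi(1/2) = \tfrac{1}{4} - \tfrac{9\sqrt{2}}{200} + \tfrac{81}{10000} < 0.195 < (0.4473)^{2}$, giving the desired bound.

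The whole argument is a short numerical verification and presents no conceptual obstacle. The only item to monitor is that the slack between $\sqrt{0.195}\approx 0.441$ and the threshold $0.4473$ is strictly positive: this is precisely what forces the radius $0.497$ (rather than the naive $0.5$) in the statement, and what will later supply the open covering with margin required when iterating \cref{sublemme2} in the proof of the blender property \cref{robustmain}.
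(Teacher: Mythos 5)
Your proposal is correct and follows essentially the same route as the paper: both reduce by $\mathbb{Z}/4$ symmetry to the sector $|\theta|\le\pi/4$ with center $c_2=9/100$, both arrive at the same squared-distance expression $r^2-0.18\,r\cos\theta+0.09^2$, and both compare its value at $(r,\theta)=(1/2,\pi/4)$ against $(0.9\cdot 0.497)^2$. The only difference is cosmetic: the paper dismisses the maximization with "rapid computations show," while you spell out the monotonicity in $\cos\theta$ and the location of the vertex of the upward parabola in $r$, which is a welcome bit of extra rigor.
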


\begin{proof} 
By symmetry, it suffices to show that $\{ r e^{i \theta} \, | \,  0 \le r \le  0.5 \, , \,  -\pi/4 \le \theta \le \pi/4 \}$ is included in the image of  $\mathbb{D}(0, 0.497)$ by $L_2$. 
For every $0 \le r \le  0.5$ and $-\pi/4 \le \theta \le \pi/4$, we have $ r e^{i \theta}   \in L_2(\mathbb{D}(0, 0.497))$ if and only if $| r e^{i \theta}  - 0.09|  \le 0.9 \cdot 0.497$, that is if and only if:
\[ \sqrt{ r^2 - 0.18 \cdot r \cos(\theta)  + 0.09^2   }  \le  0.9  \cdot 0.497 \,   .  \]
Rapid computations show this is true if the inequality is satisfied for $r = 0.5$ and $\theta = \pi/4$, which is the case since  $0.5^2 - 0.18 \cdot  0.5 \cdot \sqrt{2}/2 + 0.09^2 < 0.2 <  0.9^2  \cdot 0.497^2$.
\end{proof}

We are now ready to prove \cref{robustmain}. 

\begin{proof}[Proof  of \cref{robustmain}]
It is enough to show that for every  automorphism  $F  \in   \mathcal{U}_1$, there is $\ss  = (j_0,  j_1, \cdots ) \in J^\N$ such that for every $n \ge 0$, the curve $\cC$ intersects $R^{(j_{1}, \cdots  , j_{n})}$. Then $\cC$  has a nonempty intersection with the local stable manifold $W^\ss =  \bigcap_{ n \ge 0} R^{(j_{0},  \cdots , j_{n})}$. The result then follows immediately by induction from the following \cref{robust}.
\end{proof}

\begin{lemma} \label{robust} 
Let $\cC \subset \mathbb{D}^2 \times \mathbb{D}(0,1/2)$ be a $uu$-curve. Then  for every  $F  \in   \mathcal{U}_1$,  there is $j \in J$ such that $\cC$ intersects $R^j$ and $F(\cC  \cap R^j)$ is a $uu$-curve included in $\mathbb{D}^2  \times \mathbb{D}(0,1/2)$. 
\end{lemma}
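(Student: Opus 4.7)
The strategy is to use the open covering property \cref{sublemme2} to select the correct index $j \in J$, and then to verify in turn that $\cC \cap R^j$ is a full graph over a topological disk close to $\mathbb{D}_j$, that the third coordinate of its $F$-image lands back in $\mathbb{D}(0,1/2)$, and that the resulting curve remains tangent to $C^{uu}$.

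Parametrize $\cC$ as $z \mapsto (z, c_2(z), c_3(z))$ and set $t_0 := c_3(0) \in \mathbb{D}(0,1/2)$. The $C^{uu}$ condition on $\cC$ forces $|c_2'|, |c_3'| \le 10^{-3}$ on $\mathbb{D}$, so $c_3$ varies on $\mathbb{D}$ by at most $2 \cdot 10^{-3}$. Applying \cref{sublemme2} to $t_0$ provides some $j \in J$ and $\tau_0 \in \mathbb{D}(0,0.497)$ with $t_0 = L_j(\tau_0)$, equivalently $\tau_0 = \lambda t_0 + \tfrac{1}{10} e^{ij\pi/2}$. This $j$ is my choice.

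Define $\phi(z) := p(z) + b\, c_2(z)$. By \cref{polynomep}, the smallness of $b$ and the Lipschitz bound on $c_2$, $\phi$ is a small perturbation on $\mathbb{D}_j$ of the affine biholomorphism $\ell_j : \mathbb{D}_j \to \mathbb{D}$; a standard Rouché argument then yields a topological disk $\tilde{\mathbb{D}}_j \subset \mathbb{D}_j$ Hausdorff-close to $\mathbb{D}_j$ on which $\phi$ is a biholomorphism onto $\mathbb{D}$. For every $z \in \tilde{\mathbb{D}}_j$,
\[
F(z, c_2(z), c_3(z)) = \bigl(\phi(z),\, z,\, \lambda c_3(z) + q(z)\bigr),
\]
whose first two coordinates lie in $\mathbb{D}$ by construction. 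For the third, I rewrite
\[
\lambda c_3(z) + q(z) = \tau_0 + \lambda\bigl(c_3(z) - t_0\bigr) + \bigl(q(z) - \tfrac{1}{10}e^{ij\pi/2}\bigr),
\]
so the bounds $|\tau_0| \le 0.497$, $|c_3(z) - t_0| \le 10^{-3}$ and $|q(z) - \tfrac{1}{10}e^{ij\pi/2}| < \zeta$ (from \cref{vartheta}) give $|\lambda c_3(z) + q(z)| < 1/2$. Hence the arc of $\cC$ over $\tilde{\mathbb{D}}_j$ sits inside $R^j$; since $R^j$ projects to a set close to $\mathbb{D}_j$ in the first coordinate while $\cC$ is a graph over $z \in \mathbb{D}$, this arc is exactly $\cC \cap R^j$, which in particular is non-empty.

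Finally, $F(\cC \cap R^j)$ is the reparametrized graph
\[
\bigl\{\bigl(\zeta,\,\phi^{-1}(\zeta),\,\lambda c_3(\phi^{-1}(\zeta)) + q(\phi^{-1}(\zeta))\bigr) : \zeta \in \mathbb{D}\bigr\},
\]
whose second coordinate lies in $\mathbb{D}$ and third in $\mathbb{D}(0,1/2)$ by the previous step, and which is holomorphic on $\mathrm{int}\,\mathbb{D}$ since $\phi$ is. The $C^{uu}$-tangency condition is inherited from $\cC$ via \cref{coneproperties2}: tangent vectors of $\cC \cap R^j$ lie in $C^{uu}$ and $DF$ sends $C^{uu}$ strictly into itself on $R^j$, and this cone property survives the reparametrization by the first coordinate because $DF$ expands the $z$-component of every $C^{uu}$-vector. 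I expect the only delicate point to be the third-coordinate bound above: the whole scheme is tuned so that the slack $0.497 < 1/2$ in \cref{sublemme2} is just enough to absorb the small errors coming from the Runge approximations of $p$ and $q$, the non-zero Lipschitz constant of $c_3$, and the smallness of $b$. Everything else --- the biholomorphism property of $\phi$ and the propagation of the $C^{uu}$-cone --- follows directly from \cref{polynomep} and \cref{coneproperties2} together with the implicit function theorem.
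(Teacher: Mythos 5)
Your approach shares the paper's skeleton — use \cref{sublemme2} to select the branch $j$, then check that $F(\cC\cap R^j)$ is a $uu$-curve landing back in $\mathbb{D}^2\times\mathbb{D}(0,1/2)$, with the $0.497$-versus-$0.5$ slack absorbing the errors — but there is a genuine gap: you prove the statement only for $F=F_1$, whereas the lemma asserts it for every $F\in\mathcal{U}_1$. The identity $F(z,c_2(z),c_3(z))=(\phi(z),z,\lambda c_3(z)+q(z))$ with $\phi=p+b\,c_2$, the Rouché argument for $\phi$, and the decomposition $\lambda c_3(z)+q(z)=\tau_0+\lambda(c_3(z)-t_0)+(q(z)-\tfrac{1}{10}e^{ij\pi/2})$ are all specific to the skew-product $F_1$; a generic $F\in\mathcal{U}_1$ is only $C^0$-close to $F_1$ and has no such coordinate expression. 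Your argument does perturb (the inequalities have slack, and one may replace $\phi$ by $z\mapsto\pi_z\circ F(z,c_2(z),c_3(z))$ and the third-coordinate formula by the corresponding coordinate of $F$), but that perturbation step is exactly what has to be written out and is never addressed.

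The paper's proof is organized precisely so that no formula for $F$ is ever needed. It first proves (\cref{sublemme1}) that $\cC$ meets $\mathrm{int}\,R^j$ while avoiding $\partial_w R^j\cup\partial_t R^j$ — a statement depending only on the rough location of $R^j$ — and then (\cref{sublemme3}) applies the open-mapping theorem to $\psi\colon M\in\cC\cap R^j\mapsto\pi_z(F(M))$: since $\psi$ is holomorphic and non-constant, $\partial\,\mathrm{Im}\,\psi\subset\psi(\cC\cap\partial_z R^j)\subset\partial\mathbb{D}$, forcing $\mathrm{Im}\,\psi=\mathbb{D}$, so $F(\cC\cap R^j)$ is a graph over the whole disk. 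This replaces your Rouché-for-$\phi$ step by a boundary argument valid uniformly on $\mathcal{U}_1$. To close your gap you should either insert the perturbation from $F_1$ to $F$ explicitly, or reformulate your $\phi$ and third-coordinate estimates in terms of $F$ itself — the latter is essentially how the paper proceeds.
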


\begin{proof}[Proof of \cref{robust} ] We start with the following intermediate result:

\begin{sublemma} \label{sublemme1}
Let $\cC$ be a $uu$-curve included in $\mathbb{D}^2 \times \mathbb{D}(0,1/2)$. Then  for every $F  \in   \mathcal{U}_1$, for any $j \in J$, the curve $\cC$ intersects the interior of $R^j$ but does not intersect $\partial_w R^j \cup \partial_t R^j$.
\end{sublemma}

\begin{proof}
 Let us denote  $z_j =1/4$ if $j=0$, $=i/4$ if $j = 1$, $=-1/4$ if $j =2$ and $=-i/4$ if $j = 3$. We remark that $R^j$ contains the point $(z_j,c_2 (z_j),c_3 (z_j) ) \in \cC$ in its interior since $c_2(z_j) \in \mathrm{int}  \,  \mathbb{D}$ and  $c_3 (z_j) \in \mathbb{D}(0,1/2)$ while $\{ z_j \} \times \mathrm{int}  \,  \mathbb{D}  \times  \mathbb{D}(0,1/2) \subset  \mathrm{int}  \, R^j$ (indeed $R^j$ is close to $\mathbb{D}_j \times \mathbb{D} \times  L_j  ( \mathbb{D})$ with $L_j  ( \mathbb{D}) \Supset \mathbb{D}(0,1/2)$). Therefore $\cC$ intersects $ \mathrm{int}  \, R^j$.  Also since $\cC \subset  \mathbb{D} \times \mathrm{int}  \, \mathbb{D}   \times \mathbb{D}(0,1/2)$ while $\partial_w R^j \subset  \mathbb{D} \times \partial  \mathbb{D} \times  \mathbb{D}$ and $ \partial_t R^j \cap (\mathbb{D}^2   \times \mathbb{D}(0,1/2))  = \emptyset $, the result follows.
\end{proof}

\begin{sublemma}  \label{sublemme3}
Let $\cC$ be a $uu$-curve intersecting the interior of $R^j$, for  $j \in J$, but not $\partial_w R^j \cup \partial_t R^j$. Then $F(\cC  \cap R^j)$ is a $uu$-curve. 
\end{sublemma}

\begin{proof}
It is immediate from \cref{coneproperties2} that $F(\cC \cap R^j)$ has its tangent spaces in $C^{uu}$. Moreover, if we denote by $\pi_z$ the first coordinate projection, the map $\psi : M \in  \cC \cap R^j  \mapsto \pi_z ( F (M) ) \in \mathbb{C}$ is open (since holomorphic and non constant) on the interior of $ \cC \cap R^j $. Thus $\partial  \, \mathrm{Im} \psi$ is included in $\psi ( \partial (   \cC  \cap  R^j )) = \psi (   \cC  \cap  \partial_z  R^j ) \subset \partial \mathbb{D}$.   Since $ \mathrm{Im} \psi \subset \mathbb{D}$, we conclude that $\partial  \, \mathrm{Im} \psi = \partial \mathbb{D}$. Thus $F(\cC  \cap R^j)$ is a graph over $z \in \mathbb{D}$ and so a $uu$-curve.  
\end{proof}

Let us now conclude the proof of \cref{robust}.
By \cref{sublemme2},  there exist $\tau \in \mathbb{D}(0, 0.497)$ and $j \in J$ such that $ c_3 (0)= L_j(\tau)$. Also, by \cref{sublemme1}, the curve    $\cC$  intersects  $R^j$ but  not $\partial_w R^j \cup \partial_t R^j$.  Then, by \cref{sublemme3}, we have that  $F(\cC  \cap R^j)$ is a $uu$-curve. Since any point  $ (z,c_2(z),c_3(z)) \in  \cC  \cap  R^j$ has its third coordinate $c_3(z)$ which is $10^{-3}$-close to $c_3 (0)$, the preimage of $c_3(z)$ by $L_j$ is in $\mathbb{D}(0, 0.497 + \lambda \cdot 10^{-3}) \subset \mathbb{D}(0, 0.499)$. Since the third coordinate of $F^{-1}$ is close to $L_j(t)$ on $F(R^j)$, it follows that $F(\cC  \cap R^j)$ is a   $uu$-curve included in $\mathbb{D}^2  \times \mathbb{D}(0,1/2)$.  
 \end{proof}

\section{Creating robust cycles}

 We now create robust heterodimensional cycles and conclude the proof of the main Theorem. We will take the composition of $F_1$ with finitely many polynomial automorphisms of the form:
\[(z,w,t) \mapsto (z,w,t + P(z,w)) \text{ or } (z,w,t) \mapsto (z,w + P(z,t),t) \text{ or } (z,w,t) \mapsto (z+P(w,t),w,t) \]
where $P :  \mathbb{C} \rightarrow \mathbb{C}$ is a polynomial.  
So the final map will also be a polynomial automorphism.

\subsection{Robust intersections between $W^s(S)$ and $W^u(K)$}

We first create a robust intersection between the stable manifold $W^s(S)$ of the continuation $S$ of the saddle point $S_1$ and the unstable set $W^u(K)$ of the continuation $K$ of the basic set $K_1$, by perturbating $F_1$. This is the easiest part since the intersection will be transverse and hence robust. 

\begin{proposition} \label{prop1}
There exists $F_2 \in \mathcal{U}_1$ and a neighborhood $\cU_2 \subset \cU_1$ of $F_2$ such that every $F \in   \mathcal{U}_2$  
displays a nonempty intersection between $W^s(S)$ and $W^u(K)$. 
\end{proposition}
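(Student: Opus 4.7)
The plan is to perturb $F_1$ by composition with polynomial automorphisms of the three triangular forms $(z,w,t)\mapsto(z+P(w,t),w,t)$, $(z,w,t)\mapsto(z,w+P(z,t),t)$, and $(z,w,t)\mapsto(z,w,t+P(z,w))$, chosen by Runge-type approximation so as to route a specific finite forward orbit from $W^u_{loc}(K_1)$ into $W^s_{loc}(S_1)$, and then to invoke openness of transverse intersections to propagate to a neighborhood $\cU_2 \subset \cU_1$. First, I would fix a periodic point $p \in K_1$ (for instance a fixed point with constant symbol on $\mathbb{D}_0$) and a point $M_0 \in W^u_{loc}(p)$, then pick a finite orbit length $n$.

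Next, I would use Runge's theorem, applied coordinatewise in one complex variable, to find polynomials $P$ for the three triangular factors above that are simultaneously $C^0$-small (smaller than $\epsilon_1$) on the compact neighborhood of $W^s_{loc}(K_1) \cup F_1^{-1}(W^u_{loc}(K_1)) \cup W^s_{loc}(S_1) \cup F_1^{-1}(W^u_{loc}(S_1))$ appearing in \cref{remU1}, and that take prescribed values at the finitely many intermediate orbit points so as to steer the forward orbit of $M_0$ under the composed map $F_2$. The $C^0$-smallness ensures that $F_2 \in \cU_1$, so the hyperbolic continuations $K$ and $S$ persist for $F_2$ together with their local invariant manifolds, while the prescribed values are chosen so that the final iterate $(F_2)^n(M_0)$ lands in $W^s_{loc}(S)$. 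Since the orbit is routed through the complement of the critical neighborhood (by hyperbolicity, orbits originating in $W^u_{loc}(K_1)$ escape the horseshoe region after a few steps, and one uses the freedom in the three independent factors to aim at the basin of $S_1$), standard Runge approximation delivers the required polynomials.

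Finally, the intersection at $M_0$ must be checked to be transverse. By \cref{locmfd}, the tangent plane of $W^u(p)$ at $M_0$ lies in the cone $C^u$ and is therefore close to the $(z,t)$-plane. By \cref{locsmfdS}, the tangent of $W^s_{loc}(S)$ at $(F_2)^n(M_0)$ is close to the $(z,w)$-plane; pulling back by $(DF_2)^{-n}$, the tangent of $W^s(S)$ at $M_0$ is a complex $2$-plane that, for a generic choice of the perturbation, spans $\mathbb{C}^3$ together with the $(z,t)$-plane (the non-transverse locus is a proper analytic condition on the perturbation parameters). Hence the intersection is transverse, of complex dimension one, so it persists under $C^1$-small perturbations of $F_2$, giving the required neighborhood $\cU_2 \subset \cU_1$. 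The main obstacle is the joint Runge construction: simultaneously enforcing $C^0$-smallness on the critical neighborhood, steering the orbit through the prescribed values, and guaranteeing transversality --- all manageable by using the three independent polynomial factors in sequence and by exploiting the hyperbolic separation between $K_1$, $S_1$, and the intermediate orbit points.
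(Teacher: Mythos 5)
Your proposal follows essentially the same route as the paper: compose $F_1$ with triangular polynomial shears obtained from Runge's theorem that are small on the neighborhood of \cref{remU1} (so $F_2\in\cU_1$) but carry a point of $F_1(W^u_{loc}(A_1))$, for a fixed point $A_1\in K_1$, into $W^s_{loc}(S_1)$, and then conclude by openness of a transverse intersection. One remark: no genericity argument is needed for transversality --- the tangent plane of the (image of the) unstable bidisk stays in a cone slightly larger than $C^u$, hence is a graph over $(z,t)$, while $TW^s_{loc}(S)$ is close to the $(z,w)$-plane, so the two complex $2$-planes automatically span $\mathbb{C}^3$; it is cleanest to check this at the forward image of your intersection point, where both tangent planes are controlled, rather than pulling $W^s(S)$ back by $(DF_2)^{-n}$.
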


\begin{proof}[Proof of \cref{prop1}]
Let $A_1 \in K_1$ be a fixed point of $F_1$. 
Let us take $\omega>0$ small.
We set $\gamma_1:=1 + 2 \omega$ and $\alpha'_1:=3.9$. 
The following is an immediate consequence of \cref{locmfd} and \cref{locsmfdS} applied to $F=F_1$:

\begin{fact}
There exist $M_1  =  (\gamma_1,\beta_1, \gamma_1)    \in  F_1(W^u_{loc}(A_1))$ and $N_1  = (\alpha'_1 ,\alpha'_1,\gamma'_1) \in W_{loc}^s(S_1)$. 
\end{fact} 
 
Let $t \in \mathbb{C} \mapsto P_1(t)$ be a polynomial close to 1 on $\mathbb{D}(\gamma_1,\omega)$ and  to 0 on $\mathbb{D} \sqcup \mathbb{D}(3,1)$. 
Let $z \in \mathbb{C} \mapsto P_2(z)$ be a polynomial close to 1 on $\mathbb{D}(\alpha'_1,\omega)$ and close to 0 on $\mathbb{D} \sqcup \mathbb{D}(3,1/2)$. Such polynomials exist by the Runge theorem. We set:
\[g_1(z,w,t) := (z + P_1(t) \cdot (\alpha'_1 -  \gamma_1 ),w,t)  \, , \]
\[g_2(z,w,t):=   (z,w+P_2(z) \cdot  (\alpha'_1-\beta_1),t)  \, ,   \]
\[ \qand g_3(z,w,t):= (z,w,t+P_2(z) \cdot (\gamma'_1- \gamma_1 )) \, .  \]
Notice that $g_3 \circ g_2 \circ g_1$ sends $M_1$ close to $N_1$. Now we set:
\[ F_2 :=   g_3 \circ g_2 \circ g_1  \circ  F_1 \, .  \]

\begin{fact}  \label{factF2U1}
The map $F_2$ belongs to the neighborhood $\cU_1$ defined in \cref{polynomeq}  and \cref{remU1}.
\end{fact}

\begin{proof}
The first and third coordinate projections of $F_1(W^s_{loc} (K_1)) \cup  W^u_{loc} (K_1) \cup F_1(W^s_{loc} (S_1)) \cup W^u_{loc} (S_1)$ are respectively included in $\mathbb{D} \sqcup \mathbb{D}(3,1/2)$ and $\mathbb{D} \sqcup \mathbb{D}(3,1)$. Therefore the maps $g_1$, $g_2$ and $g_3$ are all close to the identity on a neighborhood of the latter  set and hence $F_2 :=   g_3 \circ g_2 \circ g_1  \circ  F_1 $ is close to $F_1$ on   $W^s_{loc} (K_1) \cup  F_1^{-1} (W^u_{loc} (K_1)) \cup W^s_{loc} (S_1) \cup F_1^{-1} (W^u_{loc} (S_1))$.
\end{proof}

In particular $F_2$ displays the hyperbolic continuations $K_2$ of $K_1$ and $S_2$ of $S_1$.  Also by \cref{locsmfdS}, there is a bidisk $\Delta^s$ embedded in $W_{loc}^s(S_2)$ which is $C^1$-close to  
 \[ N_1 + \{ (z , w  , 0 ) \, | \, z  ,  \, w \in \mathbb{D}(0,\omega) \}  \, . \] 
 Let $A_2$ be the continuation of $A_1$ for $F_2$. 
 By \cref{locmfd} and since $\omega$ is small, there is a bidisk $\Delta^u$ embedded in $F_1(W^u_{loc}(A_2))$ which is a graph of the form:
  \[ \Delta^u =    M_1 +    \{ ( z , w(z ,t)  , t ) \, | \, z    , \, t \in \mathbb{D}(0,\omega)   \} \, ,  \]
  where $w(0,0)$ is small and  the tangent spaces of $\Delta^u$ are included in a cone slightly larger than $C^u$ (in particular $w(z,t) \in \mathbb{D}(0,\omega)$ for $(z,t) \in \mathbb{D}(0,\omega)^2$). 
    Since $M_1$ is sent close to $N_1$ by $g_3 \circ g_2 \circ g_1$ and  the restriction of $g_3 \circ g_2 \circ g_1$ to $M_1+\mathbb{D}(0,\omega)^3$ is close to a translation, the set $(g_3 \circ g_2 \circ g_1)(\Delta^u)$ (which is included in  $F_2(W^u_{loc}(A_2)) \subset W^u(A_2)$) is $C^1$-close to: 
 \[   N_1 +    \{ (z , w(z , t)  , t ) \, | \, z , \, t \in  \mathbb{D}(0,\omega)   \} \, . \]
 The directions of the tangent spaces  of $\Delta^s$ are close to $\mathbb{C} \cdot (1,0) + \mathbb{C} \cdot (0,1)$ while those of $(g_3 \circ g_2 \circ g_1)(\Delta^u)$  are included in a cone slightly larger than $C^u$. 
Therefore $\Delta^s \subset W_{loc}^s(S_2)$ and $(g_3 \circ g_2 \circ g_1)(\Delta^u) \subset W^u (A_2)$ have a  transverse intersection close to $N_1 + (0,w(0,0),0) \approx N_1$. In particular, this intersection is robust, which ends the proof of \cref{prop1}. 
\end{proof}

\begin{remark}  \label{remU2}
The set  $\cU_2$ can be taken as the subset of $\cU_1$ formed by maps which are $\epsilon_2$-$C^0$-close to $F_2$ on the $\omega$-neighborhood of  $F_1^{-1}(M_1)$, 
 for some small $\epsilon_2 > 0$. 
\end{remark}

\subsection{Robust intersections between $W^u(S)$ and $W^s(K)$}

We now create a robust intersection between the unstable manifold $W^u(S)$ of the continuation $S$ of the saddle point $S_2$ and the stable set $W^s(K)$ of the continuation $K$ of the basic set $K_2$. This is the difficult part since both  the unstable manifold  of  $S$ and the stable manifolds of points of $K$ are one-dimensional so we can not apply a transversality argument here. To overcome this difficulty, we will use here  the blender property to obtain the robustness.

\begin{proposition} \label{prop2}
There exist $F_3 \in \mathcal{U}_2$ and a neighborhood $\mathcal{U} \subset \mathcal{U}_2$ of $F_3$ such that every $F \in   \mathcal{U}$ displays a nonempty intersection between $W^u(S)$ and $W^s(K)$. 
\end{proposition}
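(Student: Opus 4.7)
The plan is to reduce the proposition to constructing a single $F_3 \in \mathcal{U}_2$ such that $W^u(S_3)$ contains a $uu$-curve $\mathcal{C}_3 \subset \mathbb{D}^2 \times \mathbb{D}(0, 1/2)$. Once such a $\mathcal{C}_3$ is produced, \cref{robustmain} applied to $F_3$ immediately gives $\mathcal{C}_3 \cap W^s_{loc}(K_3) \neq \emptyset$, and hence $W^u(S_3) \cap W^s(K_3) \neq \emptyset$. Robustness on a sufficiently small $C^0$-neighborhood $\mathcal{U}$ of $F_3$ inside $\mathcal{U}_2$ then follows from two standard facts: hyperbolic continuation of $S$ implies that, for $F \in \mathcal{U}$, the unstable manifold $W^u(S_F)$ contains a holomorphic curve $\mathcal{C}_F$ close to $\mathcal{C}_3$ in the $C^1$-topology, and each of the $uu$-curve conditions (graph over $\mathbb{D}$ in $z$, tangent lying in the open cone $C^{uu}$, image strictly inside $\mathbb{D}^2 \times \mathbb{D}(0,1/2)$) is $C^1$-open; while \cref{robustmain} applies uniformly to every $F \in \mathcal{U}_1 \supseteq \mathcal{U}$.

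To construct $F_3$, first iterate the local unstable manifold $W^u_{loc}(S_2)$, which by \cref{factunsts} is $C^1$-close to $\{(3,3,3+s):s\in\mathbb{D}\}$, forward under $F_2$. Since the $(z,w)$-projection of $F_2$ is essentially the H\'enon map $H$ whose sink $S_o$ has basin containing $\mathbb{D}(3,1)^2$ by \cref{henon}, the $(z,w)$-coordinates of $F_2^k(W^u_{loc}(S_2))$ remain close to $(3,3)$ for every $k$, while the $t$-coordinate is stretched by the unstable factor $\lambda = 10/9$ at each iterate. For $k_0$ sufficiently large, $F_2^{k_0}(W^u_{loc}(S_2))$ then contains a holomorphic disk $\Gamma$ whose $t$-projection is a disk $\mathbb{D}(\tau_0, \rho)$ with $\tau_0$ chosen far from $0$, from $3$, and from $\gamma_1 = 1 + 2\omega$, so that $\mathbb{D}(\tau_0, \rho)$ is disjoint from the $t$-projection of the sensitive set defining $\mathcal{U}_2$.

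Now set $F_3 := g \circ F_2$ where $g = g_N \circ \cdots \circ g_1$ is a composition of finitely many shears of the three forms listed at the start of Section~4, each polynomial being constructed via Runge's theorem to be close to $0$ on the appropriate projection of the sensitive set of $\mathcal{U}_2$ (by the argument of \cref{factF2U1}, this ensures $F_3 \in \mathcal{U}_2$) and to produce the desired coordinate shift on $\Gamma$. Concretely, a first shear $(z, w, t) \mapsto (z + P_1(t), w, t)$ with $P_1$ having a large derivative at $t = \tau_0$ both places the $z$-coordinate of $\Gamma$ into $\mathbb{D}$ and gives the image curve a tangent with dominant $z$-component; subsequent shears translate $w$ into $\mathrm{int}\,\mathbb{D}$ and $t$ into $\mathbb{D}(0, 1/2)$ using intermediate target points whose relevant two-dimensional coordinate pairs avoid the sensitive projections, which are contained in sets like $\bigsqcup_{j \in J}\mathbb{D}_j \times \mathbb{D}$ (from $F_1^{-1}(W^u_{loc}(K_1))$) and $\mathbb{D}(3,1)^2$ (from $W^s_{loc}(S_1)$). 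Since $\eta = 10^{-4}$ is tiny, there is ample room in $\mathbb{D} \setminus \bigsqcup_{j \in J} \mathbb{D}_j$ to place the necessary target $z$-values. The derivatives of the later shears are kept small relative to $P_1'$, ensuring the final tangent lies in $C^{uu}$. This yields $\mathcal{C}_3 := g(\Gamma) \subset W^u(S_3)$, a $uu$-curve in $\mathbb{D}^2 \times \mathbb{D}(0, 1/2)$. The main technical difficulty is exactly this joint design of $g$: each shear must simultaneously satisfy its sensitivity constraint and contribute the right geometric effect to $\Gamma$, which is accomplished by the careful choice of intermediate target points and of the derivatives of the Runge approximants.
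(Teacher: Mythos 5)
Your high-level reduction is the right one and matches the paper: it suffices to build $F_3\in\cU_2$ so that $W^u(S_3)$ contains a $uu$-curve in $\mathbb{D}^2\times\mathbb{D}(0,1/2)$, then invoke \cref{robustmain}, and robustness follows from hyperbolic continuation plus $C^1$-openness of the $uu$-curve conditions. However, the way you propose to produce that $uu$-curve has a genuine gap.

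The central difficulty is your claim that the $(z,w)$-coordinates of $F_2^k(W^u_{loc}(S_2))$ stay near $(3,3)$ for all $k$ because ``the $(z,w)$-projection of $F_2$ is essentially $H$.'' That is not true. Recall $F_2=g_3\circ g_2\circ g_1\circ F_1$, and $g_1(z,w,t)=(z+P_1(t)\cdot(\alpha'_1-\gamma_1),w,t)$ with $\alpha'_1-\gamma_1\approx 2.9$: the $(z,w)$-dynamics of $F_2$ genuinely depends on $t$. The Runge polynomial $P_1$ is only controlled (close to $0$) on $\mathbb{D}\sqcup\mathbb{D}(3,1)$ and (close to $1$) on the tiny disk $\mathbb{D}(\gamma_1,\omega)$; off this set $P_1$ is an unbounded polynomial. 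As soon as the $t$-coordinate of the forward iterates $F_2^k(W^u_{loc}(S_2))$ leaves $\mathbb{D}(3,1)$ --- which happens after one iterate, since the $t$-range becomes $\mathbb{D}(3,\lambda)$ --- the shift $P_1(t)\cdot(\alpha'_1-\gamma_1)$ is no longer under control. Since the unstable dynamics in $t$ is expanding, the orbit cannot be kept in the controlled region. One also cannot simply strengthen the Runge constraint to ``$P_1\approx 0$ on a large disk around $3$'': such a disk would contain $\mathbb{D}(\gamma_1,\omega)$, and Runge's theorem requires the target compact (on which $P_1$ is close to $0$ or close to $1$) to have connected complement, which fails once the disk encloses $\mathbb{D}(\gamma_1,\omega)$. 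The same uncontrolled-$t$ issue then breaks the inclusion $g(\Gamma)\subset W^u(S_3)$: with $F_3=g\circ F_2$, the iterate $F_3^{k_0}(W^u_{loc}(S_3))$ is $(g\circ F_2)^{k_0}$, not $g\circ F_2^{k_0}$, so you would need $g$ close to the identity on all the intermediate iterates, which again have $t$ ranging over increasingly large disks around $3$. Finally, even setting that aside, $\Gamma$ has tangent direction essentially $(0,0,1)$, so to push it directly into $C^{uu}=\{\|(v_2,v_3)\|\le 10^{-3}|v_1|\}$ with a single $z$-shear $z\mapsto z+P(t)$ you would need $|P'|\ge 10^3$; controlling a polynomial with such a large derivative while simultaneously keeping it negligibly small on the sensitivity sets and on all intermediate iterates is not something you've demonstrated is possible.

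The paper avoids every one of these pitfalls by not iterating around $S$ at all. It applies only one forward iterate $F_2$ to $W^u_{loc}(S_2)$ (so $t$ stays in $\mathbb{D}(3,\lambda)$, barely outside $\mathbb{D}(3,1)$), then uses shears $h_1,h_2,h_3$ to carry the resulting curve onto the local stable manifold of a periodic point $B_2\in K_2$ whose third coordinate lies in $\mathbb{D}(0,1/10)$. The intersection itself is produced with a Rouch\'e argument (\cref{selectionparameter}) in a one-complex-parameter family $\mu$ --- a transversality step you cannot skip, since both manifolds are one-dimensional in $\mathbb{C}^3$. The shear $h_3$ deliberately carries an extra $(t-\gamma'_2)$ term whose $t$-derivative is only $\approx 1$: this rotates the tangent only to $\approx\mathbb{C}\cdot(1,0,1)$, not all the way into $C^{uu}$. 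The remaining rotation and the expansion into a genuine $uu$-curve over all of $\mathbb{D}$ is then done for free by iterating $F_3$ near $B_3$ (\cref{iteree}): $DF_3$ along $W^s_{loc}(B_3)$ is close to $\mathrm{diag}(10^4,0,\lambda)$, which sends $(1,0,1)$ straight into $\mathrm{int}\,C^{uu}$, and since $B_3$ has small third coordinate, all iterates stay inside $\mathbb{D}^2\times\mathbb{D}(0,1/2)$. The crucial conceptual difference is precisely that iterating near $B_3$ keeps the orbit in a region where the dynamics is uniformly controlled, whereas iterating around $S$ does not.
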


\subsubsection{Obtaining an initial intersection between $W^u(S)$ and $W^s(K)$. }

We now perturb the automorphism $F_2$  into $F_3$ in order to get  an  intersection point between $W^u(S_3)$ and $W^s(K_3)$.  We start by selecting a particular periodic point in $K_2$:

\begin{fact}  \label{factA3}
There is a periodic point $B_2 \in K_2$ of $F_2$ of third coordinate in $\mathbb{D}(0,1/10)$. 
\end{fact}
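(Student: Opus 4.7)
The plan is to produce $B_2$ as the periodic point of $F_2$ in $K_2$ associated to a well-chosen periodic itinerary, and to pin down its third coordinate by solving the linear $t$-recursion of $F_1$.

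First, I would invoke the proof of \cref{polynomeq}: $K_2$ is topologically conjugate to the full shift on $J^{\mathbb{Z}}$ via the itinerary in the pieces $R^{j}$, so every periodic sequence in $J^{\mathbb{Z}}$ is realised by a unique periodic orbit of $F_2$ inside $K_2$. I choose the period-2 itinerary $(\dots, 0, 2, 0, 2, \dots)$ and denote the two points of the resulting orbit by $B_2 = (z_0,w_0,t_0) \in R^0$ and $F_2(B_2) = (z_1,w_1,t_1) \in R^2$, so in particular $z_0 \in \mathbb{D}_0$ and $z_1 \in \mathbb{D}_2$.

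To estimate $t_0$, I would observe that by the argument of \cref{factF2U1}, the maps $g_1, g_2, g_3$ used to define $F_2 = g_3 \circ g_2 \circ g_1 \circ F_1$ are $C^0$-close to the identity on $\mathbb{D}^3 \supset K_2$, so $F_2$ and $F_1$ agree on $K_2$ up to an error $O(\epsilon)$. The third coordinate of $F_1$ satisfies $t \mapsto \lambda t + q(z)$, and \cref{vartheta} gives $q(z_0) = \tfrac{1}{10} + O(\zeta)$ and $q(z_1) = -\tfrac{1}{10} + O(\zeta)$. Solving the $2 \times 2$ linear system
\[ t_1 = \lambda t_0 + q(z_0) + O(\epsilon), \qquad t_0 = \lambda t_1 + q(z_1) + O(\epsilon), \]
which is nondegenerate because $\lambda^2 = 100/81 \neq 1$, yields
\[ t_0 = -\frac{\lambda q(z_0) + q(z_1)}{\lambda^2 - 1} + O(\epsilon + \zeta) = -\frac{1}{10(\lambda + 1)} + O(\epsilon + \zeta) = -\frac{9}{190} + O(\epsilon + \zeta). \]
Since $9/190 < 1/10$ with a comfortable margin, taking $\varepsilon, \zeta, |b|, \epsilon_1, \epsilon_2$ of the earlier constructions small enough ensures $|t_0| < 1/10$, as required.

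The only point to verify is the compatibility of the nested ``small enough'' thresholds accumulated along the paper, but this is harmless since each earlier statement is phrased with a free parameter chosen at the end. There is no genuine obstacle: the $t$-recursion is backward-contracting as $\lambda > 1$, so the periodic point depends continuously on the perturbation of the data, and the explicit limit $-9/190$ lies comfortably inside $\mathbb{D}(0,1/10)$.
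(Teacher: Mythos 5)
Your proof is correct, but it follows a genuinely different route from the paper's. The paper's argument is soft: it applies the blender covering property (\cref{robustmain}) to a horizontal $uu$-curve at height $t=0$ to produce a point of $W^s_{loc}(K_2)$ with third coordinate $0$; since local stable manifolds are graphs over $w$ with tangent spaces in the very thin cone $C^s$ (\cref{locmfd}), the point of $K_2$ on that leaf also has third coordinate near $0$, and density of periodic points in the basic set finishes the argument. You instead work symbolically: you pick the alternating itinerary $\overline{02}$ and solve the affine recursion $t\mapsto \lambda t+q(z)$ explicitly, getting $t_0=-\tfrac{1}{10(\lambda+1)}=-\tfrac{9}{190}$ up to the accumulated errors. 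Your computation is right, and the choice of itinerary is genuinely important: a fixed point with constant itinerary $\overline{j}$ has third coordinate of modulus $\tfrac{1/10}{\lambda-1}=9/10$, far outside $\mathbb{D}(0,1/10)$, so the near-cancellation $\lambda q(z_0)+q(z_1)\approx(\lambda-1)/10$ coming from the antipodal targets $\pm 1/10$ is what makes the argument work. What each approach buys: yours is more elementary (it bypasses the blender property entirely and produces an explicit period-$2$ point with a quantitative location), while the paper's reuses machinery already established and would survive any change in the numerical constants, since the covering property by the maps $L_j$ automatically supplies points of $K_2$ at every height in $\mathbb{D}(0,1/2)$. Two small points you should make explicit if you keep your version: (i) the existence and uniqueness of the periodic point with a prescribed itinerary for $F_2$ (not just $F_1$) follows from \cref{locmfd} applied to $F_2\in\mathcal U_1$, by intersecting $W^{\ss}$ and $W^{\su}$ for the periodic sequences; and (ii) the reason $P_1$ is small along the orbit is that $\lambda t+q(z)$ stays in $\mathbb{D}$ for $(z,w,t)\in R^j(F_2)$ (because $t$ lies near $\frac1\lambda(\mathbb{D}-\frac1{10}e^{ij\pi/2})$), not merely that $t\in\mathbb{D}$.
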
 

\begin{proof} 
By \cref{robustmain}, there is a point in $W^s_{loc}(K_2)$ of third coordinate 0. Since by \cref{locmfd} any local stable manifold has its tangent spaces  in $C^{s}$, there is a point of $K_2$ of third coordinate close to 0. Then, by density of periodic points in $K_2$,  there is a periodic point $B_2 \in  K_2$ of third coordinate in  $\mathbb{D}(0,1/10)$.
\end{proof}

 Let us take again $\omega>0$ small.  We set $\gamma_2 := 2 - 2 \omega$ and $\beta'_2 :=  - 0.9 $. 
The following is an immediate consequence of \cref{locmfd} and \cref{factunsts} together with \cref{factF2U1}:

\begin{fact}
There exist $M_2  = (\alpha_2,\beta_2, \gamma_2) \in F_2(W^u_{loc}(S_2))$ and $N_2  = (\alpha'_2, \beta'_2,\gamma'_2) \in W^s_{loc}(B_2)$. 
\end{fact}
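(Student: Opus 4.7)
The plan is to transfer the geometric descriptions obtained for $F_1$ over to $F_2$ via \cref{factF2U1}, and then extract the two points $M_2$ and $N_2$ by independent graph/openness arguments on the manifolds supplied by \cref{locmfd} and \cref{factunsts}.

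For $N_2$: Since $F_2 \in \cU_1$ by \cref{factF2U1}, the conclusion of \cref{locmfd} applies to $F_2$ at the periodic itinerary of $B_2$. It asserts that $W^s_{loc}(B_2)$ is a holomorphic graph over $w \in \mathrm{int}\,\mathbb{D}$. Since $\beta'_2 = -0.9 \in \mathrm{int}\,\mathbb{D}$, evaluating this graph at $w = \beta'_2$ produces a unique point $N_2 = (\alpha'_2, \beta'_2, \gamma'_2) \in W^s_{loc}(B_2)$, giving one half of the statement.

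For $M_2$: By \cref{factunsts} (again using $F_2 \in \cU_1$), $W^u_{loc}(S_2)$ is $C^1$-close to $\{(3,3,3+t) : t \in \mathbb{D}\}$; I would parametrize it by a holomorphic map $\phi : \mathbb{D} \to W^u_{loc}(S_2)$ with $\phi(t)$ close to $(3,3,3+t)$, and then study the holomorphic map $\tau : t \in \mathrm{int}\,\mathbb{D} \mapsto \pi_3(F_2(\phi(t))) \in \mathbb{C}$, where $\pi_3$ denotes the third-coordinate projection. For the reference dynamics $F_1$ (using $q(3) \approx -1/3$ on $\mathbb{D}_4$, $p(3) \approx 3$, and $\lambda = 10/9$), direct computation gives $\tau_{F_1}(t) = \lambda(3+t) + q(3) = 3 + \lambda t$, whose image on $\mathrm{int}\,\mathbb{D}$ is the open disk $\mathrm{int}\,\mathbb{D}(3,\lambda)$. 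Since $|\gamma_2 - 3| = 1 + 2\omega < 10/9 = \lambda$ for $\omega$ small, the point $\gamma_2$ sits strictly in the interior of this image, in particular at a definite distance from the boundary. Because $F_2$ is built so as to remain $C^0$-close to $F_1$ on a neighborhood of the relevant region (content of \cref{factF2U1} and \cref{remU1}), $\tau_{F_2}$ is uniformly close to $\tau_{F_1}$ on any fixed compact subdisk of $\mathrm{int}\,\mathbb{D}$, and by openness of nonconstant holomorphic maps (or the implicit function theorem applied at the approximate solution $t_0 = -\lambda^{-1}(1+2\omega) \in \mathrm{int}\,\mathbb{D}$) the image of $\tau_{F_2}$ likewise contains $\gamma_2$. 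This produces $t_\ast \in \mathrm{int}\,\mathbb{D}$ with $\tau_{F_2}(t_\ast) = \gamma_2$, and the point $M_2 := F_2(\phi(t_\ast)) = (\alpha_2, \beta_2, \gamma_2)$ meets the requirement.

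I do not anticipate any serious obstacle: the statement is advertised as an \emph{immediate} consequence of the three cited results, and the two constructions above simply make explicit what they supply. The only delicate point is verifying that the perturbations $g_1, g_2, g_3$ entering the definition of $F_2$ do not dislodge the third coordinate too much on the portion of $F_1(W^u_{loc}(S_1))$ with $|t|$ approaching $1$; but this is exactly what is encoded in $F_2 \in \cU_1$, and in any case the openness argument only needs $\gamma_2$ to remain in the image, which a sufficiently small perturbation cannot destroy.
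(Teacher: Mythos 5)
Your proposal takes the same route the paper intends: the paper declares this Fact an ``immediate consequence'' of \cref{locmfd}, \cref{factunsts} and \cref{factF2U1} and gives no further detail, and your two constructions are exactly what those citations supply. The $N_2$ half is airtight (the stable manifold $W^s_{loc}(B_2)$ is a graph over $w \in \mathrm{int}\,\mathbb{D}$, and $\beta'_2 = -0.9$ lies in that range). The $M_2$ half is also correct in spirit: $\pi_3 \circ F_2$ restricted to $W^u_{loc}(S_2)$ is a small perturbation of $t \mapsto 3 + \lambda t$, which surjects onto $\mathrm{int}\,\mathbb{D}(3,\lambda)$, and $|\gamma_2 - 3| = 1 + 2\omega < \lambda = 10/9$ with a margin uniform in small $\omega$, so an openness or Rouch\'e argument yields $t_\ast$ with $\tau_{F_2}(t_\ast) = \gamma_2$.

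One small imprecision worth flagging: you say the delicate boundary behavior ``is exactly what is encoded in $F_2 \in \cU_1$'', but \cref{remU1} only places the $\cU_1$-control on a neighborhood of $F_1^{-1}(W^u_{loc}(S_1))$, whose third coordinate is confined to $\mathbb{D}(3,\lambda^{-1})$; the relevant parameter $t_\ast = -\lambda^{-1}(1+2\omega)$ satisfies $|t_\ast| > \lambda^{-1}$, so $\phi(t_\ast)$ sits just outside that controlled set. What really makes the step go through is the explicit form $F_2 = g_3 \circ g_2 \circ g_1 \circ F_1$: neither $g_1$ nor $g_2$ touches the third coordinate, and $g_3$ shifts it by $P_2(z)(\gamma'_1 - \gamma_1)$, which is negligible because the first coordinate along $F_1(W^u_{loc}(S_2))$ stays near $3 \in \mathbb{D}(3,1/2)$ where $P_2 \approx 0$. (A residual subtlety---$g_1$ could shove the $z$-coordinate when $P_1(t)$ is uncontrolled for $t$ just outside $\mathbb{D}(3,1)$---is an imprecision in the paper's own choice of $P_1$, curable by requiring $P_1 \approx 0$ on $\mathbb{D} \sqcup \mathbb{D}(3,\lambda)$, still disjoint from $\mathbb{D}(\gamma_1,\omega)$.) None of this changes the architecture of your argument, which matches the paper's.
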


Let $t \in \mathbb{C} \mapsto Q_1(t)$ be a polynomial close to 1 on $\mathbb{D}(\gamma_2,\omega)$ and close to 0 on $\mathbb{D} \sqcup \mathbb{D}(3,1)$. 
 Similarly let $w \in \mathbb{C} \mapsto Q_2(w)$ be a polynomial close to 1 on $\mathbb{D}(\beta'_2,\omega)$ and close to 0 on $\mathbb{D}(0,1/2)  \sqcup \mathbb{D}(3,1) $. 
 Let us also take $\mu \in \mathbb{D}(0,1/10)$.
 Then we define: 
\[h_1(z,w,t) := (z  ,w + Q_1(t) \cdot (\beta'_2-\beta_2)    ,t) \, ,  \]
\[h_2(z,w,t):=(z,w,t+Q_2(w) \cdot (\gamma'_2-\gamma_2))    \,    ,   \]
\[  \qand  h_3(z,w,t):=(z+ Q_2(w) \cdot   (\alpha'_2 - \alpha_2+\mu + (t-\gamma'_2))   , w    ,t )    \,  .    \]
  Notice that $h_3 \circ h_2 \circ h_1$ sends $M_2$ close to $N_2$. Now we set: 
\[ F_3 :=  h_3 \circ h_2 \circ h_1  \circ    F_2 \, .  \]

\begin{fact} 
The map $F_3$ belongs to the neighborhood $\cU_2$ defined in \cref{prop1}  and \cref{remU2}.
\end{fact}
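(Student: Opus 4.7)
The plan is to mirror the argument of \cref{factF2U1}. Write $F_3 = \Psi \circ F_2$ with $\Psi := h_3 \circ h_2 \circ h_1$. Membership in $\cU_2$ requires two things by \cref{remU2}: first, $F_3 \in \cU_1$, i.e.\ $F_3$ is $\epsilon_1$-$C^0$-close to $F_1$ on a neighborhood of
\[ \mathcal{N} := W^s_{loc}(K_1) \cup F_1^{-1}(W^u_{loc}(K_1)) \cup W^s_{loc}(S_1) \cup F_1^{-1}(W^u_{loc}(S_1)); \]
and second, $F_3$ is $\epsilon_2$-$C^0$-close to $F_2$ on the $\omega$-neighborhood of $F_1^{-1}(M_1)$. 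Since $F_1^{-1}(M_1) \in W^u_{loc}(A_1) \subset F_1^{-1}(W^u_{loc}(K_1)) \subset \mathcal{N}$, for $\omega$ small the second condition is subsumed by the first (with a possibly smaller closeness constant). Because $F_2 \in \cU_1$ is already close to $F_1$ on $\mathcal{N}$, both conditions reduce to showing that $\Psi$ is uniformly close to the identity on a neighborhood of $F_2(\mathcal{N})$, which by closeness of $F_2$ to $F_1$ reduces further to a neighborhood of $F_1(\mathcal{N})$. By forward/backward invariance, this set lies in a small neighborhood of
\[ \Sigma := W^s_{loc}(K_1) \cup W^u_{loc}(K_1) \cup W^s_{loc}(S_1) \cup W^u_{loc}(S_1). \]

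I would then compute the relevant coordinate projections of $\Sigma$. The third-coordinate projection lies in $\mathbb{D} \sqcup \mathbb{D}(3,1)$: for the $K_1$-pieces this is because $W^s_{loc}(K_1), W^u_{loc}(K_1) \subset \bigsqcup_j R^j \subset \mathbb{D}^3$ by \cref{locmfd}; for the $S_1$-pieces it is \cref{locsmfdS} and \cref{factunsts}. Hence $Q_1(t)$ is uniformly small on $\Sigma$, and $h_1$ is close to the identity. For the second-coordinate projection, I claim the projection of $F_1(\mathcal{N})$ lies in $\mathbb{D}(0,1/2+\epsilon) \sqcup \mathbb{D}(3,1)$ for $\epsilon$ arbitrarily small: for $W^u_{loc}(K_1)$, the cone condition $C^u$ constrains the $w$-coordinate to vary by at most $10^{-3}\sqrt{8}$ along each unstable disk, while $K_1 \subset \mathbb{D}(0,1/2)^2 \times \mathbb{D}$ pins down the base value; for $F_1(W^s_{loc}(K_1))$, the formula $F_1(z,w,t)=(p(z)+bw, z, \lambda t+q(z))$ makes the new $w$-coordinate equal to the old $z$-coordinate, which lies near $\bigsqcup_j \mathbb{D}_j \subset \mathbb{D}(0,1/2)$; the $S_1$-pieces sit entirely near $\mathbb{D}(3,1)$ in the $w$-coordinate by the same propositions. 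Consequently $Q_2(w)$ is uniformly small on a neighborhood of $F_1(\mathcal{N})$, so both $h_2$ and $h_3$ (which depend on $w$ only through the factor $Q_2(w)$, with the remaining data bounded) are close to the identity there.

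Combining the three estimates, $\Psi$ is arbitrarily close to the identity on a neighborhood of $F_1(\mathcal{N})$, and hence on $F_2(\mathcal{N})$ since $F_2$ is $C^0$-close to $F_1$ on $\mathcal{N}$. This yields simultaneously $F_3 \in \cU_1$ and $F_3$ close to $F_2$ on the $\omega$-neighborhood of $F_1^{-1}(M_1)$, completing the proof of $F_3 \in \cU_2$.

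The main point I expect to need care over is the second-coordinate bound for $W^s_{loc}(K_1)$: a direct estimate fails because \cref{locmfd} only produces a graph over $w \in \mathrm{int}\,\mathbb{D}$, whose $w$-projection is the whole of $\mathbb{D}$ and therefore meets the region where $Q_2$ is not controlled. The trick is to observe that it is $F_1(\mathcal{N})$, not $\mathcal{N}$ itself, that needs to be controlled; one iterate of $F_1$ trades the unrestricted $w$-range for the old $z$-range, which by the H\'enon structure is confined to the tiny disks $\mathbb{D}_j$ of radius $\eta=10^{-4}$, well inside the safe region $\mathbb{D}(0,1/2)$.
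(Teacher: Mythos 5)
Your verification that $F_3\in\cU_1$ is sound and in fact more careful than the paper's one-line appeal to ``the same proof'' as \cref{factF2U1}: you correctly notice that the relevant projections are now the second and third coordinates (since $h_2,h_3$ depend on $Q_2(w)$ rather than on a function of $z$), and that the $w$-projection of $W^s_{loc}(K_1)$ is all of $\mathbb{D}$ but becomes the tiny set $\bigsqcup_j\mathbb{D}_j$ after one application of $F_1$. One quantitative slip: your bound for the $w$-projection of $W^u_{loc}(K_1)$ is $\mathbb{D}(0,1/2+10^{-3}\sqrt{8})$, and that excess is a fixed constant, not ``arbitrarily small''; since $Q_2$ is only produced by Runge to be small on the exact compact set $\mathbb{D}(0,1/2)\sqcup\mathbb{D}(3,1)$, you are evaluating it just outside its controlled region. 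This is harmless --- either use the finer localization of the $w$-coordinate of $K_1$ in a neighborhood of $\bigsqcup_j\mathbb{D}_j$ (its $(z,w)$-projection $K_o$ lies in $\bigsqcup_j V_j$), or request smallness of $Q_2$ on $\mathbb{D}(0,0.51)$ --- but it should be said.

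The genuine gap is the claim that the second defining condition of $\cU_2$ (closeness of $F_3$ to $F_2$ on the $\omega$-neighborhood of $F_1^{-1}(M_1)$, cf.\ \cref{remU2}) is subsumed by the first. The inclusion $W^u_{loc}(A_1)\subset F_1^{-1}(W^u_{loc}(K_1))$ is backwards: local unstable manifolds satisfy $F_1^{-1}(W^u_{loc}(K_1))\subset W^u_{loc}(K_1)$, not the reverse, and indeed $M_1$ has first coordinate $1+2\omega\notin\mathbb{D}$, so $M_1\notin W^u_{loc}(K_1)$ and $F_1^{-1}(M_1)\notin F_1^{-1}(W^u_{loc}(K_1))$. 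More importantly, $F_2$ is \emph{not} close to $F_1$ near $F_1^{-1}(M_1)$: that is exactly where $g_3\circ g_2\circ g_1$ acts nontrivially, sending $M_1$ to $N_1=(3.9,3.9,\gamma'_1)$, which is far from $M_1$. (Were $F_1^{-1}(M_1)$ inside the region where membership in $\cU_1$ forces $\epsilon_1$-closeness to $F_1$, one would get $F_2(F_1^{-1}(M_1))\approx M_1$, contradicting $F_2(F_1^{-1}(M_1))\approx N_1$.) Consequently the image under $F_2$ of the $\omega$-neighborhood of $F_1^{-1}(M_1)$ is a neighborhood of $N_1$, which is not contained in the neighborhood of $F_1(\mathcal{N})$ on which you prove $\Psi\approx\mathrm{id}$, and your argument does not reach it. The missing step --- which is precisely what the paper's proof supplies --- is a separate local check at $N_1$: its $w$-coordinate $3.9$ lies in $\mathbb{D}(3,1)$ and its $t$-coordinate lies in $\mathbb{D}\sqcup\mathbb{D}(3,1)$, so $Q_1$ and $Q_2$ are small there, hence $h_1,h_2,h_3$ are close to the identity on a ball around $N_1$ of radius bounded below independently of $\omega$, which yields the second condition directly. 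With that check added (and the small Runge adjustment above), your proof is complete.
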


\begin{proof} 
The same proof as for \cref{factF2U1} shows that $F_3 \in \cU_1$. Also recall that $F^{-1}_1(M_1)$ is sent close to $N_1$ by $F_2$. 
Since $N_1 = (3.9,3.9, \gamma'_1)$ with $\gamma'_1 \in  \mathbb{D}(0,1/2)$, the maps $h_1$, $h_2$ and $h_3$ are all close to the identity on a ball around  $N_1$ of radius bounded from below independently of $\omega$. Since $\omega$ is small, this implies that $F_3$ is close to  $F_2$ on the $\omega$-neighborhood of  $F_1^{-1}(M_1)$ and so $F_3 \in \cU_2$. 
\end{proof}

By \cref{locmfd}, there is a disk $U_\mu$ embedded in the local stable manifold of the continuation $B_3 \in K_3$ of $B_2 \in K_2$ for $F_3$ which is close to: 
\begin{equation} \label{angle} 
 \{   ( \alpha'_2 ,\beta'_2   + w  ,\gamma'_2) \, | \,  w \in  \mathbb{D}(0,\omega)  \} \, . 
 \end{equation}
  Let $S_3$ be the continuation of $S_2$ for $F_3$. 
By \cref{factunsts}  and since $\omega$ is small, there is a disk $\tilde{V}_\mu$ embedded in $F_2(W^u_{loc}(S_3))$ which is close to $\{   (\alpha_2  , \beta_2    ,   \gamma_2  + t ) \, | \, t \in  \mathbb{D}(0,\omega)  \}$. Then its image  is  $V_\mu := ( h_3 \circ h_2 \circ h_1)(\tilde{V}_\mu) \subset F_3(W^u_{loc}(S_3)) \subset W^u (S_3)$ and  is close to: 
\begin{equation} \label{angle2} 
 \{   ( \alpha'_2      + \mu + t   , \beta'_2 ,\gamma'_2 + t ) \, | \, t \in  \mathbb{D}(0,\omega)  \} \, . 
 \end{equation}
 
We will use the following independent technical result:

\begin{lemma} \label{selectionparameter}
Let $(U_\mu)_{\mu \in \mathbb{D}}$ be a holomorphic family of   
graphs $w \in \mathrm{int} \,  \mathbb{D} \mapsto (U^1_\mu (w) , w , U^3_\mu (w) ) \in \mathbb{C}^3$, where $U^1_\mu$ and $U^3_\mu$ are small.  
Let $(V_\mu)_{\mu \in \mathbb{D}}$ be a holomorphic family of   graphs $t \in  \mathrm{int} \, \mathbb{D} \mapsto  (V^1_\mu (t) ,  V^2_\mu (t)  ,    t  ) \in \mathbb{C}^3$. 
Suppose that $\mu   \mapsto V^1_\mu(0)$ is close to $\mu$ and that $V^2_\mu (t) \in \mathrm{int} \,  \mathbb{D}$.
Then there exists $\mu_o \in \mathbb{D}$ such that $U_{\mu_o}$ intersects $V_{\mu_o}$. 
\end{lemma}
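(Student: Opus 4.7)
\medskip

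The plan is to reduce the problem of finding an intersection point to finding a zero of a holomorphic function of $\mu$, and then invoke Rouché's theorem. An intersection of $U_\mu$ and $V_\mu$ at a point is a pair $(w,t)$ satisfying the three equations
\[ w = V^2_\mu(t), \qquad t = U^3_\mu(w), \qquad U^1_\mu(w) = V^1_\mu(t). \]
The strategy is to first solve the second and third coordinate equations automatically using a fixed point argument, leaving only the first coordinate as a function of $\mu$ whose zero we hunt for.

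First I would fix $\mu \in \mathbb{D}$ and consider the holomorphic self-map $\Phi_\mu : \mathbb{D} \to \mathbb{C}$ defined by $\Phi_\mu(t) := U^3_\mu(V^2_\mu(t))$. Since $V^2_\mu(t) \in \mathrm{int}\,\mathbb{D}$ and $U^3_\mu$ is small on $\mathrm{int}\,\mathbb{D}$, the map $\Phi_\mu$ sends $\mathbb{D}$ into a small disk around $0$, hence into $\mathrm{int}\,\mathbb{D}$. By the Schwarz lemma (or equivalently a straightforward contraction argument), $\Phi_\mu$ admits a unique fixed point $t^*(\mu) \in \mathrm{int}\,\mathbb{D}$, and $t^*(\mu)$ is itself small. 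The uniqueness together with the holomorphic dependence of $\Phi_\mu$ in $\mu$, combined with the implicit function theorem applied to $(\mu,t) \mapsto t - \Phi_\mu(t)$ (whose $t$-derivative is close to $1$ by smallness of $\Phi_\mu$), ensures that $\mu \mapsto t^*(\mu)$ is holomorphic on $\mathbb{D}$.

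Next I would define
\[ \Psi(\mu) := V^1_\mu(t^*(\mu)) - U^1_\mu(V^2_\mu(t^*(\mu))), \]
which is holomorphic in $\mu \in \mathbb{D}$ by the previous step. Any zero $\mu_o$ of $\Psi$ yields, by construction, an intersection point of $U_{\mu_o}$ and $V_{\mu_o}$ of coordinates $(V^1_{\mu_o}(t^*(\mu_o)), V^2_{\mu_o}(t^*(\mu_o)), t^*(\mu_o))$. Since $t^*(\mu)$ is small and $U^1_\mu$ is small, one has $\Psi(\mu) = V^1_\mu(0) + O(\varepsilon)$ on $\mathbb{D}$, and by hypothesis $V^1_\mu(0)$ is close to $\mu$; thus $|\Psi(\mu) - \mu|$ is small uniformly on $\overline{\mathbb{D}}$.

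Finally, Rouché's theorem applied on $\partial\mathbb{D}$ to the pair $(\mathrm{Id}, \Psi)$ yields, under the smallness assumptions, that $\Psi$ has the same number of zeros in $\mathbb{D}$ as the identity, namely one. The resulting $\mu_o$ gives the desired intersection. The only delicate point is that the qualitative hypotheses ``$U^1_\mu, U^3_\mu$ small'' and ``$V^1_\mu(0)$ close to $\mu$'' must be quantitatively strong enough both to make $\Phi_\mu$ a contraction of $\mathbb{D}$ and to make the Rouché comparison $|\Psi(\mu) - \mu| < |\mu| = 1$ hold on $\partial \mathbb{D}$; but this is exactly what ``close'' and ``small'' in the statement should quantitatively mean, as they are produced by shrinking $\omega$ in the applications (\ref{angle}) and (\ref{angle2}).
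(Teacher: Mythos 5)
Your proof is correct and takes essentially the same approach as the paper's: both reduce the three intersection equations to a single scalar holomorphic function of $\mu$ that is uniformly close to the identity on $\overline{\mathbb{D}}$, and then invoke Rouch\'e's theorem. The only cosmetic difference is that the paper disposes of the $w$- and $t$-equations by a $\mu$-dependent change of coordinates $(z,w,t)\mapsto(z-U^1_\mu(w),w,t-U^3_\mu(w))$ that flattens $U_\mu$ onto $\{z=t=0\}$ and then reads the intersection off the plane $\{t=0\}$, whereas you stay in the original coordinates and solve those two equations explicitly via the fixed point $t^*(\mu)$ of $\Phi_\mu(t)=U^3_\mu(V^2_\mu(t))$ --- the same reduction, made more transparent.
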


\begin{proof} 
Up to the change of coordinates $(z,w,t) \mapsto (z - U^1_\mu (w), w , t - U^3_\mu (w))$ which is  close to the identity, we can suppose that $U_\mu = \{ (0, w ,0) \, | \,  w  \in \mathbb{D} \}$. Notice that $V_\mu$ intersects the plane $\{ t = 0\}$ at a unique point equal to $ (V^1_\mu (0) ,  V^2_\mu (0)  , 0 )$. Also the map $\mu \in  \mathrm{int} \,  \mathbb{D} \mapsto V^1_\mu (0)$ is holomorphic and close to $\lambda \mapsto  \lambda  $.  Then by the Rouch\'e theorem there is $\mu_o \in \mathbb{D}(0,1/2)$ such that $V^1_{\mu_o} (0) = 0$. Then the point $(0, V^2_{\mu_o}  (0)  , 0) $ belongs to $U_{\mu_o} \cap  V_{\mu_o}$. 
\end{proof}

As a consequence of  \cref{selectionparameter} together with  \cref{angle} and \cref{angle2}, we obtain:

\begin{proposition} \label{constantes}
 The unstable manifold $W^u( S_3 )$ intersects the local stable manifold  $W^s_{loc}(B_3)$ at a point $N$  such that $T_N W^u( S_3 )$ is close to $\mathbb{C} \cdot ( 1  , 0 , 1)$. 
\end{proposition}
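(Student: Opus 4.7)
The plan is a direct application of Lemma \ref{selectionparameter} to the families $(U_\mu)_\mu$ and $(V_\mu)_\mu$ described by \eqref{angle} and \eqref{angle2}, viewing the parameter $\mu \in \mathbb{D}(0, 1/10)$ already appearing in the definition of $h_3$ as a varying variable and exploiting holomorphic dependence of all relevant objects on this parameter.

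First I would normalize by translating by $-(\alpha'_2, \beta'_2, \gamma'_2)$ and rescaling the disks by $1/\omega$. In the normalized coordinates, \eqref{angle} shows that $U_\mu$ is a graph $w \mapsto (U^1_\mu(w), w, U^3_\mu(w))$ with $U^1_\mu, U^3_\mu$ small, and \eqref{angle2} shows that $V_\mu$ is a graph $t \mapsto (V^1_\mu(t), V^2_\mu(t), t)$ with $V^1_\mu(0)$ close to $\mu/\omega$ and $V^2_\mu(t) \in \mathrm{int}\,\mathbb{D}$. Both families depend holomorphically on $\mu$ since $F_3$ depends polynomially on $\mu$ (only through $h_3$) and hyperbolic continuations of fixed/periodic points together with their local invariant manifolds depend holomorphically on the defining map. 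The hypotheses of Lemma \ref{selectionparameter} are therefore met, and applying it yields $\mu_o$, with $\mu_o/\omega \in \mathbb{D}(0, 1/2)$, such that $U_{\mu_o} \cap V_{\mu_o}$ is nonempty. Since $\mu_o \in \mathbb{D}(0, \omega/2) \subset \mathbb{D}(0, 1/10)$ for $\omega$ small, fixing this value determines a single admissible $F_3 \in \cU_2$, and any point $N \in U_{\mu_o} \cap V_{\mu_o}$ lies in $W^s_{loc}(B_3) \cap W^u(S_3)$.

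For the tangent direction, \eqref{angle2} tells us that $V_{\mu_o}$ is $C^1$-close to the affine line $\{(\alpha'_2 + \mu_o + t, \beta'_2, \gamma'_2 + t)\}$, whose tangent is $(1, 0, 1)$. Since $W^u(S_3)$ is one-complex-dimensional and contains $V_{\mu_o}$, we have $T_N W^u(S_3) = T_N V_{\mu_o}$, which is close to $\mathbb{C} \cdot (1, 0, 1)$. The only mildly delicate step in the whole argument is verifying the holomorphic dependence on $\mu$, which is standard.
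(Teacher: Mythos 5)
Your proposal is correct and takes essentially the same approach as the paper: the paper also obtains \cref{constantes} as a direct consequence of \cref{selectionparameter} applied to the families $(U_\mu)$ and $(V_\mu)$ of \eqref{angle} and \eqref{angle2}. Your write-up merely makes explicit the affine normalization (translation by $-(\alpha'_2,\beta'_2,\gamma'_2)$, rescaling by $1/\omega$, and the accompanying reparametrization $\tilde\mu=\mu/\omega$) and the holomorphic dependence on $\mu$ that the paper leaves implicit, and the tangent-direction claim is read off from \eqref{angle2} exactly as you say.
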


\subsubsection{Obtaining robust intersections between $W^u(S_3)$ and $W^s(K_3)$ using the blender property. }

We now conclude the proofs of \cref{prop2} and of  the main Theorem. We use the blender property to make robust the initial intersection between $W^u(S_3)$ and $W^s(K_3)$ we obtained above.

\begin{proposition} \label{iteree}
Let $\Delta_o$ be an embedded  open disk intersecting $W^s_{loc}(B_3)$ at a point $N$ such that $T_N \Delta_o$ is close to $\mathbb{C} \cdot (1,0,1)$. Then there exists $n \ge 1$ such that for every  map $F$ close to $F_3$ and every  embedded disk $\Delta$ close to $\Delta_o$, the intersection  $F^n(\Delta) \cap (\mathbb{D}^2 \times       \mathbb{D}(0,1/2))$ contains a $uu$-curve. 
\end{proposition}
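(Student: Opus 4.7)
The plan is to iterate $\Delta$ forward until unstable expansion turns $F^n(\Delta)$ into a large analytic curve with tangent in the strong unstable cone $C^{uu}$, from which a $uu$-curve can be extracted. Since $N \in W^s_{loc}(B_3)$ and $B_3 \in K_3$ is a periodic point of some period $p$, \cref{locmfd} supplies a periodic symbolic sequence $\ss = (j_0, j_1, \dots) \in J^\N$ with $N \in W^\ss$, so that $F_3^k(N) \in R^{j_k}$ for every $k \ge 0$ and $F_3^{kp}(N) \to B_3$ as $k \to \infty$. Since the conclusion is preserved when we pass from $\Delta$ to a sub-disk (we only seek a $uu$-curve \emph{inside} $F^n(\Delta)$), we may replace $\Delta_o$ by a sub-disk centered at $N$ of small positive $z$-diameter $r_0 > 0$, ensuring that $\Delta_o \subset \mathrm{int}\,R^{j_0}$ and that every tangent of $\Delta_o$ lies in $C^u$, using $T_N \Delta_o \approx \mathbb{C}\cdot(1,0,1) \in C^u$.

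The initial direction $(1,0,1)$ belongs to $C^u$ but not to $C^{uu}$; however the differential computed in \cref{coneproperties} gives $DF\cdot(1,0,1) \approx (p'(z), 1, q'(z) + \lambda) \approx (10^4, 1, \lambda)$, which lies in $C^{uu}$. Hence after one iteration the tangent enters $C^{uu}$, and by \cref{coneproperties2} stays there, being expanded by a factor $> 10^3$ at each subsequent step. For $F$ close to $F_3$ and $\Delta$ close to $\Delta_o$ (the required closeness being allowed to depend on the eventual $n$), continuity forces $F^k(\Delta) \subset R^{j_k}$ for every $0 \le k \le n$, because $F_3^k(N) \in R^{j_k}$. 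Thus the cone and expansion estimates apply at every step: for $1 \le k \le n$ the iterate $F^k(\Delta)$ is a holomorphic graph $z_k \mapsto (z_k, w_k(z_k), t_k(z_k))$ over its $z$-projection $D_k \subset \mathbb{C}$, and the graph-transform formula $z_{k+1} = p(z_k) + b\,w_k(z_k)$ has derivative close to $p'(z_k) \approx 10^4$, yielding $\operatorname{diam}(D_k) \ge 10^{3k} r_0$.

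Now choose $n = kp$ with $k$ large enough that $10^{3n} r_0 > 3$ and that $F_3^{kp}(N)$ lies within $10^{-4}$ of $B_3$; then shrink the neighborhoods of $F_3$ and $\Delta_o$ so that, denoting by $N_\Delta \in \Delta$ the point corresponding to $N$ under the parametrizations, $F^n(N_\Delta)$ lies within $10^{-3}$ of $B_3$. By \cref{polynomeq} and \cref{factA3} we have $B_3 \in \mathbb{D}(0,1/2)^2 \times \mathbb{D}(0,1/10)$, so $\pi_z(F^n(N_\Delta)) \in \mathbb{D}(0, 1/2 + 10^{-3})$; combined with $\operatorname{diam}(D_n) > 3$, this shows that $F^n(\Delta)$ is a graph over all of $\mathbb{D}$ in $z$. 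Integrating the cone inequality $\|(c_2', c_3')\|_2 \le 10^{-3}$ across a $z$-interval of length at most $3/2$, the graph functions $c_2, c_3$ deviate from their values at $F^n(N_\Delta)$ by at most $2\cdot 10^{-3}$, so $c_2(z) \in \mathrm{int}\,\mathbb{D}$ and $c_3(z) \in \mathbb{D}(0, 1/2)$ throughout $z \in \mathbb{D}$. This yields the desired $uu$-curve inside $F^n(\Delta) \cap (\mathbb{D}^2 \times \mathbb{D}(0,1/2))$.

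The main technical obstacle I anticipate is verifying that $F^k(\Delta)$ is a \emph{globally} single-valued graph over $D_k$, since the $C^{uu}$ cone condition alone gives only local injectivity of $\pi_z$. I expect this to follow from a Koebe-style distortion estimate applied inductively at each graph-transform step: the derivative $p'(z_k) + b\, w_k'(z_k) \approx 10^4$ dominates the second-derivative control on $D_k$ (obtained from $|p''|$ via Cauchy estimates), hence the reparametrization $z_{k-1} \mapsto z_k$ is univalent on $D_{k-1}$ at each step, and univalence of the iterated graph $F^k(\Delta)$ over $D_k$ follows by induction on $k$.
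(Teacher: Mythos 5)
Your argument is essentially the paper's: one application of $DF$ sends the direction $(1,0,1)$ into $\mathrm{int}\,C^{uu}$, cone invariance and strong-unstable expansion produce after $n$ iterates a holomorphic graph over $\mathbb{D}$, choosing $n$ a multiple of the period of $B_3$ (whose third coordinate lies in $\mathbb{D}(0,1/10)$) confines that graph to $\mathbb{D}^2\times\mathbb{D}(0,1/2)$, and continuity transfers this to nearby $F$ and $\Delta$. The one slip is the assertion that $F^k(\Delta)\subset R^{j_k}$ for all $k\le n$, which contradicts your own estimate $\mathrm{diam}(D_k)\ge 10^{3k}r_0$ once this exceeds the $z$-extent $\approx 2\cdot 10^{-4}$ of $R^{j_k}$; what you should iterate is the connected component of $F^k(\Delta)\cap R^{j_k}$ through the marked point (the graph transform of \cref{sublemme3}), whose open-mapping argument also supplies the global univalence you flag as a remaining concern, making the Koebe-distortion detour unnecessary.
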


\begin{proof}
Let us consider the following matrix:
\[\mathcal{M}_o := \begin{pmatrix}
10^4 & 0 & 0 \\
1 & 0  & 0 \\
    0 & 0 &  \lambda
\end{pmatrix} 
\]
For every matrix $\mathcal{M}$ close to $\mathcal{M}_o$ and every vector $v$ close to $(1,0,1)$, we have $ \mathcal{M} \cdot v \in \mathrm{int} \,  C^{uu}$. Observe that the differential of $F_3$ at any point in $W^s_{loc}(B_3)$ is  close to $\mathcal{M}_o$. 
 Thus, if $\Delta_o$ intersects $W^s_{loc}(B_3)$ at $N$ with a  direction close to $\mathbb{C} \cdot (1,0,1)$, the direction of $F_3(\Delta_o)$ at $F_3(N) \in W^s_{loc}(B_3)$ is in $\mathrm{int} \,  C^{uu}$. Then, by invariance of the cone $C^{uu}$ (see \cref{coneproperties2}) and by  expansion in the strong unstable direction, for $n$ large enough,
  $F_3^{n}(\Delta_o)$ contains a holomorphic graph $\Delta_n$ over $z$ varying in a neighborhood of $\mathbb{D}$ with tangent spaces in  $\mathrm{int} \, C^{uu}$ (in particular it contains a $uu$-curve). 
  If moreover $n$ is a multiple of the period of $B_3$, then $\Delta_n$ intersects $W^s_{loc}(B_3)$ at a point $F_3^{n}(N)$ close to $B_3$. Since the third coordinate of $B_3$ is in $\mathbb{D}(0,1/10)$ by \cref{factA3}, the curve $\Delta_n$ is included in $\mathbb{C}^2 \times  \mathbb{D}(0,1/3)$.   Then, by continuity,  for every  map $F$ close to $F_3$ and every $\Delta$ close to $\Delta_o$, we have that $F^n(\Delta) \cap (\mathbb{D}^2 \times       \mathbb{D}(0,1/2))$ contains a $uu$-curve. 
\end{proof}

\begin{proof}[Proof of \cref{prop2}]
By \cref{constantes} and then \cref{iteree},  for every $F$ in a neighborhood $\cU \subset \cU_2$ of  $F_3$,  the unstable manifold  $W^u( S )$ contains a $uu$-curve $\cC \subset \mathbb{D}^2 \times  \mathbb{D}(0,1/2)$. Therefore by \cref{robustmain}, there is a nonempty intersection between  $\cC \subset W^u(S)$  and $W_{loc}^s(K)$.
This ends the proof of \cref{prop2}. 
\end{proof}

\begin{proof}[Proof of the Main Theorem]
It follows immediately from \cref{prop1} and \cref{prop2}.
\end{proof}

\begin{proof}[Proof of the main Corollary]
We just take $\cU_d := \cU \cap \mathrm{Aut}_d(\mathbb{C}^3)$ with $d:=\mathrm{deg} \, F_3$ to conclude. 
\end{proof}

\small \noindent \texttt{sebastien.biebler@imj-prg.fr} \\
\small Sorbonne Universit\'e, Universit\'e Paris-Cit\'e, CNRS  \\
Institut de Math\'ematiques de Jussieu-Paris Rive Gauche \\
 75005 Paris, France \\

\end{document}